\newtheorem{theorem}{Theorem}[section]
\newtheorem{lemma}[theorem]{Lemma}
\newtheorem{claim}[theorem]{claim}
\theoremstyle{definition}
\newtheorem{definition}[theorem]{Definition}
\theoremstyle{remark}
\numberwithin{equation}{section}
\numberwithin{equation}{section}
\newsavebox{\savepar}
\begin{document}

\title{Existence of multiple solutions to an elliptic problem with measure data}
\author{Amita Soni and D. Choudhuri}

\date{}
\maketitle

\begin{abstract}
\noindent In this paper we prove the existence of multiple nontrivial solutions of the following equation.
\begin{align*}
\begin{split}
-\Delta_{p}u & = \lambda |u|^{q-2}u+f(x,u)+\mu\,\,\mbox{in}\,\,\Omega,\\
u & = 0\,\, \mbox{on}\,\, \partial\Omega;
\end{split}
\end{align*}
where $\Omega \subset \mathbb{R}^N$ is a smooth bounded domain with $N \geq 3$, $1 < q^{\prime} < q < p-1;\\
\; \lambda,\;$ and $f$ satisfies certain conditions,  $\mu>0$ is a Radon measure, $q^{\prime}=\frac{q}{q-1}$ is the conjugate of $q$.
\end{abstract}
\begin{flushleft}

{\bf Keywords}:~ $p$-laplacian, Cerami sequence, Ekeland Variational principle, Radon measure.
\end{flushleft}

\section{Introduction}
For many years now, the problem 
\begin{align}
\begin{split}
-\Delta_p u&=g(x,u),~\text{in}~\Omega\\
u&=0,~\text{on}~\partial\Omega,\label{p1}
\end{split}
\end{align}
has been studied extensively using the celebrated critical point theory which was introduced by Ambrosetti \& Rabinowitz in the Mountain pass theorem \cite{Ambro}. In order to apply the Mountain pass theorem one needs the Ambrosetti-Rabinowitz (AR) type condition on the nonlinear term $g$ which is as follows.\\
For $\theta>p$, $R>0$, we have
\begin{align}
\begin{split}
0&<\theta G(x,t)\leq g(x,t)t\label{c1}
\end{split}
\end{align}
$\forall |t|\geq R$ a.e. in $\Omega$, where $G(x,t)=\int_{0}^{t}g(x,s)ds$. The (AR) condition also implies that there exists positive constants $a$, $a_1$, $a_2$ such that $G(x,t)\geq a_1|t|^a-a_2$, $\forall (x,t)\in\Omega\times\mathbb{R}$. Thus $g$ is $p$-superlinear at infinity, in the sense that $\lim_{|t|\rightarrow\infty}\frac{G(x,t)}{|t|^p}=\infty$.\\
Of late, the problem in \eqref{p1} has been tackled without the AR condition by \cite{Costa, ittu1,kris1,Liu,li1,Miyagaki,sun1,wang1} and the references therein. Miyagaki \cite{Miyagaki} studied \eqref{p1} with a Laplacian by using the following condition on g: $\exists t_0>0$ such that $\frac{g(x,t)}{t}$ is increasing for $t\geq t_0$ and decreasing for $t\leq -t_0$ $\forall x\in\Omega$. The author in \cite{Miyagaki} guaranteed the existence of a nontrivial solution by using the Mountain Pass theorem with the Palais-Smale condition. Li et al \cite{li1} have extended this result, due to Miyagaki \cite{Miyagaki}, by replacing $-\Delta$ with $-\Delta_p$. In Li \cite{li1}, the authors needed the following subcritical growth condition $|g(x,t)|\leq C(1+|t|^{r-1})$ $\forall~t\in\mathbb{R}$ and for almost all $x$ in $\Omega$, $r\in[1,p^*)$, if $1<p<N$ and $p^*=\infty$ if $p\geq N$. A further generalized subcritical type growth condition was introduced by Lan \cite{lan1,lan2}, where $r=p^*$, to prove the existence of atleast one nontrivial weak solution to \eqref{p1} using the Mountain Pass theorem but without using the AR condition.\\
Motivated by the work due to Chung et al \cite{Chung} who have studied the existence of multiple solution for the problem
\begin{align}
\begin{split}
-\Delta_{p}u & = \lambda |u|^{q-2}u+f(x,u)\,\,\mbox{in}\,\,\Omega,\\
u & = 0\,\, \mbox{on}\,\, \partial\Omega;\label{mot1}
\end{split}
\end{align}
with concave-convex nonlinearities in bounded domains, we consider the following problem.
\begin{align}
\begin{split}
(P):~~-\Delta_{p}u & = \lambda |u|^{q-2}u+f(x,u)+\mu\,\,\mbox{in}\,\,\Omega,\\
u & = 0\,\, \mbox{on}\,\, \partial\Omega,\label{main_prob}
\end{split}
\end{align}
where $1<q'<q<p-1<p^*$, $p^{\ast}=\frac{Np}{N-p}$ is the Sobolev conjugate of $p$, $\mu>0$ is a Radon measure. 
We will prove the existence of multiple nontrivial weak solutions to the problem \eqref{main_prob}. 
The conditions we assume on the continuous function $f:\bar{\Omega}\times\mathbb{R}\rightarrow \mathbb{R}$ -  is slightly different from that assumed in \cite{Chung} - are as follows.

$(f_{0}) \underset{|t| \rightarrow \infty}{\text{lim}}\frac{f(x,t)}{{| t|}^{p^{\ast}-1}}= 0$ uniformly a.e. $x \in \Omega$.\\

$(f_{1})\;$ Let F be the primitive of $f$. There exists a positive constant $\bar{t} > 0$ such that \indent \indent $F(x,t) \geq 0$ a.e. $x \in \Omega$ and all $t \in [0,\bar{t}]$, where $F(x,t) = \int_0^{t}f(x,s)ds$.\\

$(f_{2}) \underset{|t| \rightarrow 0}{\text{lim sup}}\;\frac{F(x,t)}{{| t|}^{p}} < \lambda_{1}$ uniformly a.e. $x \in \Omega\;,\lambda_{1}$ being the first eigenvalue of $-\Delta_{p}$.\\

$(f_{3})  \underset{|t| \rightarrow \infty}{\text{lim}}\frac{F(x,t)}{{|t|}^{p}}= \infty$ uniformly a.e. $x \in \Omega$.\\

$(f_{4})\;$ There exists $\tilde{t} > 0$ such that for any $x \in \Omega$, the function $t\mapsto \frac{f(x,t)}{{\mid t\mid}^{p-2}t}$ is increasing \indent \indent if $t \geq \tilde{t}$ and decreasing if $t \leq -\tilde{t}\;, \forall\; x \in \Omega$.\\
We will denote the Sobolev space as $W_{0}^{1,p}(\Omega):=\left\lbrace u:\nabla u \in L^{p}(\Omega), u|_{\partial\Omega}=0\right\rbrace$ equipped with the norm ${\|.\|}_{1,p}$ which is defined as ${\|u\|}_{1,p}^{p}=\int_{\Omega}{| \nabla u|}^{p}dx$. We will denote ${\| .\|}_{1,p}$ as ${\| .\|}$ throughout this manuscript. We now state the main result of the paper which is as follows.
\begin{theorem}\label{Mainthm}
Suppose that $(f_{0})-(f_{4})$ hold. Then problem $(P)$ in \eqref{main_prob} possesses more than one nontrivial weak solution.
\end{theorem}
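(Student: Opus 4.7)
I will seek the two weak solutions as distinct critical points of the $C^{1}$ functional
\begin{equation*}
J(u) = \frac{1}{p}\|u\|^p - \frac{\lambda}{q}\int_\Omega |u|^q\,dx - \int_\Omega F(x,u)\,dx - \int_\Omega u\,d\mu,\qquad u\in W_0^{1,p}(\Omega),
\end{equation*}
where $\mu$ is treated as an element of $W^{-1,p'}(\Omega)$ so that the last integral makes sense as the duality pairing. Conditions $(f_0)$ and $(f_2)$, together with the Sobolev embedding $W_0^{1,p}\hookrightarrow L^{p^{*}}$, ensure $J\in C^1(W_0^{1,p}(\Omega),\mathbb{R})$, and its critical points coincide with the weak solutions of $(P)$. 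The plan is to produce the two solutions through two different variational devices: a negative-energy local minimizer $u_1$ via the Ekeland variational principle, and a higher mountain-pass-level critical point $u_2$ via a Cerami sequence.

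For $u_1$: using $(f_2)$ near zero, $(f_0)$ at infinity, and the Sobolev--Poincar\'e inequality, $J$ is bounded below and weakly lower semicontinuous on a small closed ball $\bar B_\rho\subset W_0^{1,p}(\Omega)$. Picking $v\in C_c^\infty(\Omega)$ with $v>0$, the one-parameter expansion
\begin{equation*}
J(tv) = \frac{t^p}{p}\|v\|^p - \frac{\lambda t^q}{q}\int_\Omega v^q\,dx - \int_\Omega F(x,tv)\,dx - t\int_\Omega v\,d\mu
\end{equation*}
is strictly negative for all small $t>0$: the measure term contributes $-t\int v\,d\mu<0$ and $-(\lambda/q)t^q\int v^q$ is of lower order than $t^p$ since $q<p$. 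Hence $m_\rho := \inf_{\bar B_\rho}J<0=J(0)$. Applying Ekeland's variational principle on $\bar B_\rho$, and taking $\rho$ small enough that $m_\rho$ lies strictly below the infimum of $J$ on $\partial B_\rho$, the weak limit $u_1$ of the resulting minimizing sequence is an interior point, hence a nontrivial critical point with $J(u_1)<0$.

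For $u_2$: by $(f_3)$, $J(tv)\to-\infty$ as $t\to\infty$, so one may fix $T>0$ with $\|Tv\|>\rho$ and $J(Tv)<J(u_1)$. Since $u_1$ is a (strict) local minimum of $J$, the min-max value
\begin{equation*}
c_{*} := \inf_{\gamma\in\Gamma}\max_{s\in[0,1]} J(\gamma(s)),\qquad \Gamma := \{\gamma\in C([0,1],W_0^{1,p}(\Omega)):\ \gamma(0)=u_1,\ \gamma(1)=Tv\},
\end{equation*}
satisfies $c_{*}>J(u_1)$. The Cerami form of the mountain pass theorem then delivers a sequence $\{u_n\}\subset W_0^{1,p}(\Omega)$ with $J(u_n)\to c_{*}$ and $(1+\|u_n\|)\|J'(u_n)\|_{W^{-1,p'}}\to 0$.

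The main obstacle is to show $\{u_n\}$ is bounded in $W_0^{1,p}(\Omega)$ in the absence of the Ambrosetti--Rabinowitz condition; this is where $(f_4)$ is essential. A standard Miyagaki--Liu-type argument exploits the monotonicity of $t\mapsto f(x,t)/|t|^{p-2}t$ outside $[-\tilde t,\tilde t]$ to show that the auxiliary function $H(x,t):=\tfrac{1}{p}f(x,t)t - F(x,t)$ is bounded below on $\Omega\times\mathbb{R}$ and, in combination with $(f_3)$, satisfies $H(x,t)\to\infty$ as $|t|\to\infty$. Substituting $u=u_n$ into the identity $J(u)-\tfrac{1}{p}\langle J'(u),u\rangle$ and using the Cerami decay to absorb the $|u_n|^q$ and measure contributions yields $\sup_n\|u_n\|<\infty$ by the usual contradiction argument (normalizing $v_n:=u_n/\|u_n\|$ after assuming $\|u_n\|\to\infty$). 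Once boundedness is established, $(f_0)$ and the compact embedding $W_0^{1,p}\hookrightarrow L^r$ for $r<p^{*}$ give weak-to-strong continuity of the lower-order terms, while the $(S_{+})$ property of $-\Delta_p$ upgrades weak to strong convergence of $\{u_n\}$. The strong limit $u_2$ is a critical point with $J(u_2)=c_{*}>J(u_1)$, so $u_2\neq u_1$, and both are nontrivial weak solutions of $(P)$.
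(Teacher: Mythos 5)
Your variational skeleton (Ekeland for a negative-energy minimizer, a Cerami mountain-pass sequence for a second critical point at positive level, with $(f_4)$ replacing the Ambrosetti--Rabinowitz condition via the Miyagaki--Liu monotonicity trick) is exactly the one the paper uses --- but the paper applies it only to a family of \emph{regularized} problems $(P_n)$ with data $\mu_n\rightharpoonup\mu$, not to $(P)$ itself. The step where you declare that ``$\mu$ is treated as an element of $W^{-1,p'}(\Omega)$ so that the last integral makes sense as the duality pairing'' is a genuine gap, and in fact it erases the entire difficulty of the problem. A positive bounded Radon measure on $\Omega\subset\mathbb{R}^N$ does not in general belong to $W^{-1,p'}(\Omega)$ when $p\leq N$ (the Dirac mass is the standard counterexample), so the term $\int_\Omega u\,d\mu$ is not defined for arbitrary $u\in W_0^{1,p}(\Omega)$, your functional $J$ is neither well defined nor $C^1$ on that space, and neither the Ekeland argument nor the mountain pass theorem can be run on it. Consistently with this, the paper's notion of weak solution lives in $W_0^{1,s}(\Omega)$ with $s<\frac{N(p-1)}{N-1}<p$ and is tested against $W^{1,p}(\Omega)\cap C_0(\bar\Omega)$; a critical point of your $J$ would automatically lie in $W_0^{1,p}(\Omega)$, which is more regularity than measure-data problems admit.

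What is missing from your proposal is therefore the whole second half of the paper's proof: approximate $\mu$ by regular data $\mu_n$, obtain the two critical points $u_n$ (mountain pass, $I_n(u_n)=c>0$) and $v_n$ (Ekeland, $I_n(v_n)=c'<0$) of each $I_n$, and then pass to the limit in $n$. That passage is not soft: one tests with truncations $T_k(u_n)$ to get $\int_\Omega|\nabla T_k(u_n)|^p\leq Ck$, deduces Marcinkiewicz bounds $|\{|u_n|\geq k\}|\leq Ck^{-N(p-1)/(N-p)}$ and $|\{|\nabla u_n|>t\}|\leq Ct^{-N(p-1)/(N-1)}$, hence boundedness of $(u_n)$ in $W_0^{1,s}(\Omega)$ for $s<\frac{N(p-1)}{N-1}$, and then invokes a Boccardo--Gallou\"{e}t type argument (the paper's Appendix A) to get almost-everywhere convergence of the gradients, which is what allows passing to the limit in the principal part $\int_\Omega|\nabla u_n|^{p-2}\nabla u_n\cdot\nabla v$. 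Your appeal to the $(S_+)$ property of $-\Delta_p$ cannot substitute for this, because it requires testing $J'(u_n)$ against $u_n-u$, which again presupposes that $u_n$ is an admissible test function for the measure term. I would accept your argument as a proof of multiplicity for the regularized problems $(P_n)$ (i.e., as a repackaging of Lemmas 3.1--3.4 of the paper), but not as a proof of Theorem 1.1.
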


\section{Preliminary definitions}
We now discuss a few definitions, notations and essential results which will be used in this paper.
\begin{definition}
(Cerami condition) A functional $\Phi$ is said to satisfy the Cerami condition at a level $c \in \mathbb{R}$ if any sequence $(u_{n}) \subseteq X$ such that $\Phi(u_{n}) \rightarrow c$ and $(1+{\| u_{n}\|}){\Phi^{\prime}(u_{n})} \rightarrow 0$ has a convergent subsequence.
\end{definition}
\noindent In critical point theory, there are some situations in which a Palais-Smale sequence does not lead to a critical point, but a Cerami sequence can lead to a critical point. This whole thing based on the concept of `{\it linking}' (refer \cite {Martin}), for more details and examples. Cerami condition implies Palais-Smale condition and hence Cerami condition is a weaker condition than Palais-Smale. 
\begin{definition}\label{defn2}
	Let $(\mu_n)$ be a bounded sequence of measures in
	$\mathfrak{M}(\Omega)$. We say that $(\mu_n)$ converges to a measure $\mu \in \mathfrak{M}(\Omega)$ in the sense of measure if 
	$$\displaystyle{\int_{\Omega}\phi d\mu_n} \rightarrow
	\int_{\Omega}\phi d\mu\,\, \,\,\,\forall\,\,\phi\in
	C_0(\bar{\Omega}).$$ We denote this convergence by $\mu_n
	\xrightharpoonup{} \mu$. The topology defined via this weak
	convergence is metrizable and a bounded sequence with respect to
	this topology is pre-compact.
\end{definition}
\begin{definition}\label{defn3}
(Ekeland Variational Principle) Let $\Phi$ be a lower semicontinuous bounded below function from a Banach space X into $\mathbb{R} \cup \left\lbrace+\infty\right\rbrace$. For every $\epsilon > 0$, there is $x_{0} \in X$ such that $\Phi(x) \geq \Phi(x_{0})-\epsilon{\|x-x_{0}\|}$ for every $x \in X$ (refer {\cite{Ekeland}}).
\end{definition}
\noindent Throughout the article, we will denote the measure of a set $E$ in the sigma algebra of $\Omega$ as where $|E|$ and the absolute value of any real number, say $a$, as $|a|$.
\noindent We will use the Marcinkiewicz space  ${M}^q(\Omega)$ $\cite{Benilan}$ (or the weak $L^q(\Omega)$ space)  defined for every $0 < q <\infty$, as the space of all measurable functions $f:\Omega\rightarrow \mathbb{R}$ such that the corresponding distribution satisfy an estimate of the form
$$|\{x\in \Omega:|f(x)|>t\}|\leq \frac{C}{t^q},\hspace{0.4cm}t>0,C<\infty.$$ 
For bounded $\Omega$ we have ${M}^q\subset {M}^{\bar{q}}$ if $q\geq \bar{q}$, for some fixed positive $\bar{q}$. We recall here the following useful continuous embeddings
\begin{equation}\label{mar}
L^q(\Omega)\hookrightarrow {M}^q(\Omega)\hookrightarrow L^{q-\epsilon}(\Omega),
\end{equation}
for every $1<q<\infty$ and $0<\epsilon<q-1$.\\
We first consider a sequence of problems $(P_{n})$ which are as follows
\begin{align}
\begin{split}
-\Delta_{p}u & = \lambda |u|^{q-2}u+f(x,u)+\mu_{n}\,\,\mbox{in}\,\,\Omega,\\
u & = 0\,\, \mbox{on}\,\, \partial\Omega,
\end{split}
\end{align}
where $\mu_{n} \rightharpoonup \mu$ in measure. From here onwards we will denote $\int_{\Omega}fdx=\int_{\Omega}f$.
The corresponding energy functional of the sequence of problems $(P_{n})$ is written as
\begin{equation} 
I_{n}(u)= \frac{1}{p}\int_{\Omega}{|\nabla u|}^{p}dx - \frac{\lambda}{q}\int_{\Omega}{| u|}^{q}dx - \int_{\Omega}F(x,u) dx - \int_{\Omega}u d\mu_{n}
\end{equation}
and its Fr\'{e}chet derivative is defined as
\begin{equation}
<I_{n}^{\prime}(u),v> = \int_{\Omega}{|\nabla u|}^{p-2}\nabla u\nabla v dx - \lambda\int_{\Omega}{|u|}^{q-2}uv dx - \int_{\Omega}f(x,u)v dx - \int_{\Omega}\mu_{n}v dx
\end{equation}
$\forall u,v \in T$ where $T=W^{1, p}(\Omega)\cap C_0(\bar{\Omega})$, $C_0(\bar{\Omega})=\{\varphi\in C(\bar{\Omega}):\varphi|_{\partial\Omega}=0\}$ and $C(\bar{\Omega})$ will denote the space of continuous functions over $\bar{\Omega}$. We now define the corresponding energy functional of the problem $(P)$ as 
\begin{equation}
I(u)= \frac{1}{p}\int_{\Omega}{|\nabla u|}^{p}dx - \frac{\lambda}{q}\int_{\Omega}{|u|}^{q}dx - \int_{\Omega}F(x,u) dx - \int_{\Omega}u d\mu 
\end{equation}
and its Fr\'{e}chet derivative is defined as 
\begin{equation}
<I^{\prime}(u),v> = \int_{\Omega}{|\nabla u|}^{p-2}\nabla u\nabla v dx - \lambda\int_{\Omega}{|u|}^{q-2}uv dx - \int_{\Omega}f(x,u)v dx - \int_{\Omega}v d\mu
\end{equation}
for every $u,v\in T$.
\begin{definition}
$u\in S=\{u\in W_0^{1,s}(\Omega):||u||_{p^{*}}=1\}$, $s<\frac{N(p-1)}{N-1}$, is said to be a weak solution of the problem $(P)$ if
\begin{equation*}
\int_{\Omega}|\nabla u|^{p-2}\nabla u\nabla\varphi dxdy - \lambda\int_\Omega |u|^{q-2}u\varphi dx - \int_\Omega f(x,u)\varphi dx - \int_{\Omega}\varphi d\mu =0,
\end{equation*}
\end{definition}
$\forall$\; $\varphi \in T$.\\
 \section{Existence Results}
In order to prove the main result of this paper, given in the form of Theorem (1.1), we first prove a few lemmas related to the mountain pass theorem and the Cerami condition. We first develop the necessary tools for the mountain pass theorem. Observe that $I_{n}$'s are $C^{1}$ functionals defined over $W_{0}^{1,p}(\Omega)$. 

\begin{lemma}\label{lem1}
There exists $\lambda^{\prime}$ such that for all $\lambda \in (0,\lambda^{\prime})$, we can choose $\rho > 0, \eta > 0$ with $I_{n}(u) > \eta \; \forall \;u \in W_{0}^{1,p}(\Omega)$ and $\|u\| = \rho$.
\end{lemma}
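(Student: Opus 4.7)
The strategy is the standard mountain-pass geometry argument: bound $I_n(u)$ from below by a function depending only on $\|u\|$, then choose the sphere radius $\rho$ and threshold $\lambda'$ so that the resulting lower bound is strictly positive on $\|u\|=\rho$.

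\textbf{Step 1 (global growth bound on $F$).} First I would combine $(f_2)$ and $(f_0)$ to obtain a pointwise estimate of the form
\begin{equation*}
F(x,t) \;\le\; \frac{\lambda_{1}-\delta}{p}|t|^{p} \;+\; C\,|t|^{p^{*}}
\end{equation*}
for a.e.\ $x\in\Omega$ and every $t\in\mathbb{R}$, with some $\delta>0$ and $C>0$. The $|t|^p$ term comes from $(f_2)$ applied near $t=0$ (pick $\beta<\lambda_{1}$ with $F(x,t)\le (\beta/p)|t|^{p}$ for $|t|$ small, then set $\delta=\lambda_{1}-\beta$); the $|t|^{p^{*}}$ term absorbs the subcritical tail given by $(f_0)$; the continuity of $F$ handles the intermediate band and gets swallowed into $C$.

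\textbf{Step 2 (functional lower bound).} Inserting this estimate into $I_n$ and applying Poincaré ($\lambda_{1}\|u\|_{p}^{p}\le\|u\|^{p}$) together with the Sobolev embeddings $\|u\|_{q}\le C_{q}\|u\|$, $\|u\|_{p^{*}}\le S\|u\|$, and the duality bound $|\int_{\Omega} u\,d\mu_{n}|\le K_{n}\|u\|$ for the regularized measures, I would obtain
\begin{equation*}
I_{n}(u) \;\ge\; \frac{\delta}{p\,\lambda_{1}}\|u\|^{p} \;-\; \frac{\lambda\,C_{q}^{q}}{q}\|u\|^{q} \;-\; C\,S^{p^{*}}\|u\|^{p^{*}} \;-\; K_{n}\|u\|.
\end{equation*}

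\textbf{Step 3 (choice of $\rho$, $\lambda'$ and $\eta$).} Setting $\|u\|=\rho$ and factoring out $\rho$, the right-hand side becomes $\rho\cdot h(\rho)$ with
\begin{equation*}
h(\rho) \;=\; \frac{\delta}{p\,\lambda_{1}}\rho^{p-1} \;-\; \frac{\lambda\,C_{q}^{q}}{q}\rho^{q-1} \;-\; C\,S^{p^{*}}\rho^{p^{*}-1} \;-\; K_{n}.
\end{equation*}
Since $q-1<p-1<p^{*}-1$, the function $\rho\mapsto \frac{\delta}{p\lambda_{1}}\rho^{p-1}-CS^{p^{*}}\rho^{p^{*}-1}$ has a strict positive maximum at some $\rho_{0}>0$. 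I would fix $\rho\in(0,\rho_{0})$ close to $\rho_{0}$, then choose $\lambda'>0$ small enough that $\frac{\lambda C_{q}^{q}}{q}\rho^{q-1}$ is a small fraction of that maximum, and finally absorb the constant $K_{n}$ into the remaining slack. Setting $\eta:=\tfrac{1}{2}\rho\,h(\rho)$ completes the choice.

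\textbf{Main obstacle.} The only delicate point is the linear perturbation $-\int u\,d\mu_{n}$: because $q-1>0$ and $p-1>0$, every one of the nonlinear terms vanishes faster than the measure term as $\rho\to 0$, so a naive $\rho$-small strategy cannot work. Handling this forces one to work at an intermediate $\rho$ where $\tfrac{\delta}{p\lambda_{1}}\rho^{p-1}$ already dominates $K_{n}$, which in turn needs a uniform control on $K_{n}$ (i.e.\ on the $W^{-1,p'}$-norm of the approximants $\mu_{n}$ along the chosen regularization). Verifying that such a uniform bound is available for the chosen smoothing of $\mu$ is the real content of the lemma; the rest is the routine Sobolev/Poincaré book-keeping sketched above.
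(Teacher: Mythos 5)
Your Steps 1 and 2 are sound, and Step 1 is in fact more careful than the paper: the paper applies the $(f_2)$ estimate $F(x,t)\le(\lambda_1-\epsilon)|t|^p$ globally even though $(f_2)$ only provides it near $t=0$, whereas your addition of the $C|t|^{p^*}$ tail from $(f_0)$ repairs that. The genuine gap is in Step 3, and you have located it yourself without closing it. After factoring out $\rho$, the measure contributes the constant $-K_n$ to $h(\rho)$, and this constant is insensitive to $\lambda$: shrinking $\lambda'$ only suppresses the $\rho^{q-1}$ term. So positivity of $h(\rho)$ forces $K_n<\max_{\rho>0}\bigl(\tfrac{\delta}{p\lambda_1}\rho^{p-1}-CS^{p^*}\rho^{p^*-1}\bigr)$, a fixed number determined by $f$, $p$ and $\Omega$ alone. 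A ``uniform control on $K_n$'' (boundedness of the dual norms of the $\mu_n$, which the weak convergence does give) is therefore not enough --- you would need \emph{smallness} of $\mu$ in the relevant dual norm, which is not a hypothesis of the lemma. As written, your argument proves the statement only for measures below that threshold, so Step 3 cannot be completed in the stated generality.

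For comparison, the paper escapes this dead end by estimating the measure term as $c_2\|u\|^{p^*}\|\mu_n\|_{p'}$ rather than $K_n\|u\|$, so that after dividing by $\|u\|^p$ it contributes $\rho^{p^*-p}$ and can be lumped with the $\lambda\rho^{q-p}$ term into the single function $\tau_\lambda$, whose minimum value $k\lambda^{(p^*-p)/(p^*-q)}$ tends to $0$ as $\lambda\to 0$; the conclusion then holds for all small $\lambda$ with no smallness condition on $\mu$. But this works only because the measure term has been made superlinear in $\|u\|$, and the inequality $\|u\|_p\lesssim\|u\|^{p^*}$ on which it rests fails for $\|u\|$ small, so the paper's route is not one you could have reached from a correct duality estimate. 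The honest version of the argument needs either a smallness assumption on $\mu$ or a genuinely different treatment of the measure term; your proposal correctly identifies this as the crux but does not resolve it.
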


\begin{proof}
From the assumption $(f2)$ we have, $\forall\; \epsilon > 0\; \exists\; \delta > 0$ such that $F(x,t)\leq (\lambda_{1}-\epsilon){|t|}^p, \;\forall |t| < \delta$. Hence,
\begin{align*}
I_{n}(u)&= \frac{1}{p}\int_{\Omega}{|\nabla u|}^{p}dx- \frac{\lambda}{q}\int_{\Omega}{|u|}^{q}dx - \int_{\Omega}F(x,u) dx - \int_{\Omega}u d\mu_{n}\\ 
&\geq \frac{1}{p}{\|u\|}^{p} - \frac{\lambda c_{1}}{q}{\|u\|}^{q} - \frac{(\lambda_{1}-\epsilon)}{p}{\|u\|}_{p}^{p}- c_{2}{\|u\|}_{p}{\|\mu_{n}\|}_{p^{\prime}}\\ &\geq \frac{1}{p}{\|u\|}^{p} - \frac{\lambda c_{1}}{q}{\|u\|}^{q} - \frac{(\lambda_{1}-\epsilon)}{p\lambda_{1}}{\|u\|}^{p}- c_{2}{\|u\|}^{p^{\ast}}{\|\mu_{n}\|}_{p^{\prime}}\\ &= \left[\frac{\epsilon}{p\lambda_{1}} - \left\lbrace \frac{\lambda c_{1}}{q}{\|u\|}^{q-p} + c_{2}{\|u\|}^{p^{\ast}-p}{\|\mu_{n}\|}_{p^{\prime}}\right\rbrace \right] {\|u\|}^{p}, 
\end{align*}
where we have used the Rayleigh constant $\lambda_{1}=\underset{\underset{u\neq 0}{u\in {W_{0}^{1,p}}\left(\Omega\right)}}{\text{min}}\left\lbrace \frac{\int_{\Omega}{|\nabla u|}^{p}dx}{\int_{\Omega}{|u|}^{p}dx}\right\rbrace $. Consider a continuous function $\tau_{\lambda} : (0,\infty) \rightarrow \mathbb{R}$ defined as $\tau_{\lambda}(t)=  \frac{\lambda c_{1}}{q}{|t|}^{q-p} + c_{2}{\|\mu_{n}\|}_{p^{\prime}}{|t|}^{p^*-p}$.
\noindent Since, $1 < q < p < p^{\ast}$, so it can be seen that $\underset{t\rightarrow \infty}{\text{lim}}\tau_{\lambda}(t)= \underset{t \rightarrow 0^{+}}{\text{lim}}\tau_{\lambda}(t)= +\infty$.
Hence, it is possible to find a `$t_{\ast}$' such that $0 < \tau_{\lambda}(t_{\ast})= \underset{t \in (0,\infty)}{\text{min}} \tau_{\lambda}(t)$.
On solving for $t_{\ast}$ such that $\tau'_{\lambda}(t_*)= 0$, we get $t_{\ast} = {\left[ \frac{\lambda c_{1}(p-q)}{q c_{2}(p^{\ast}-p){\|\mu_{n}\|}_{p^{\prime}}}\right]}^{\frac{1}{p^{\ast}-q}}$. This implies   
\begin{align}
\tau_{\lambda}(t_{\ast})&= k\lambda^{\frac{p^{\ast}-p}{p^{\ast}-q}} \rightarrow 0~\text{as}~\lambda \rightarrow 0.\label{min1}
\end{align}
Thus, by choosing $\|u\| = \rho$ and from \eqref{min1} there exists a $\lambda^{\prime}$ such that $\forall \lambda\in (0,\lambda')$ we have $I_n(u)>\eta$.
\end{proof}
\noindent The following lemma guarantees the existence of a function $v$ such that $I_{n}(v) < 0$.
\begin{lemma}\label{lem2}
There exists $e_{1} \in W_{0}^{1,p}(\Omega),  {\| e_{1}\|} > 0$, such that $I_{n}(te_{1}) < 0 $ for sufficiently large $t$.
\end{lemma}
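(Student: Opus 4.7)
The plan is to use the superlinearity condition $(f_{3})$ as a substitute for the classical Ambrosetti--Rabinowitz condition in order to force $I_n(te_1) \to -\infty$ along a fixed ray. I would fix any $e_1 \in C_0^{\infty}(\Omega)$ with $e_1 \geq 0$ and $e_1 \not\equiv 0$; then $\|e_1\| > 0$, $\|e_1\|_p > 0$, and, since $e_1$ is bounded with compact support, $\int_\Omega e_1\, d\mu_n$ is a finite real number. This is the only point in the argument where the measure datum actually enters.

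The first technical step is to turn $(f_{3})$ into a uniform pointwise lower bound on $F$. Given any $M > 0$, the hypothesis produces an $R_M > 0$ such that $F(x,s) \geq M|s|^p$ whenever $|s| \geq R_M$, for a.e.\ $x \in \Omega$. Since $f$ is continuous on $\bar{\Omega}\times\mathbb{R}$, so is $F$, and in particular $F$ is bounded on the compact slab $\bar{\Omega}\times[-R_M,R_M]$. This yields a constant $C_M > 0$ such that
\begin{equation*}
F(x,s) \geq M|s|^p - C_M \quad \text{for a.e.\ } x \in \Omega \text{ and all } s \in \mathbb{R}.
\end{equation*}
Substituting $s = te_1(x)$ and integrating gives $\int_\Omega F(x,te_1)\, dx \geq M t^p \|e_1\|_p^p - C_M|\Omega|$.

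Plugging this estimate back into the definition of $I_n$ produces, for every $t > 0$, an upper bound of the form
\begin{equation*}
I_n(te_1) \leq t^p\Bigl(\tfrac{1}{p}\|e_1\|^p - M\|e_1\|_p^p\Bigr) - \tfrac{\lambda t^q}{q}\|e_1\|_q^q + C_M|\Omega| + t\Bigl|\int_\Omega e_1\, d\mu_n\Bigr|.
\end{equation*}
Choosing $M$ large enough that $M\|e_1\|_p^p > \tfrac{1}{p}\|e_1\|^p$ renders the coefficient of $t^p$ strictly negative. Since $1 < q < p$, every other term grows at most like $t^q$ (the $t$ and constant contributions are of still lower order), so the $t^p$ term dominates and $I_n(te_1) \to -\infty$ as $t \to \infty$. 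In particular $I_n(te_1) < 0$ for $t$ sufficiently large, which is the claim. The only even mildly delicate step is the passage from the asymptotic information in $(f_{3})$ to a global pointwise inequality for $F$; continuity of $f$ together with compactness of $\bar{\Omega}$ closes that gap, and the rest is a straightforward polynomial comparison in $t$.
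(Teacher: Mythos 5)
Your proposal is correct and follows essentially the same route as the paper: both use $(f_3)$ to obtain the global lower bound $F(x,s)\geq M|s|^p - C_M$, substitute $s=te_1$, and choose $M$ so large that the coefficient of $t^p$ in the resulting upper bound for $I_n(te_1)$ is negative, whence the $t^p$ term dominates the lower-order $t^q$, $t$, and constant terms as $t\to\infty$. Your write-up is in fact slightly more careful than the paper's, which compresses the final comparison into ``choose $M$ large enough such that the whole quantity becomes negative,'' but the underlying argument is identical.
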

\begin{proof}
Let $e_{1} \in W_{0}^{1,p}(\Omega)$ with ${\|e_{1}\|} > 0$. From the assumption $(f3)$, we have $\forall~M > 0,\; \exists\; k(M) > 0$ such that $F(x,t) \geq M{|t|}^{p}- k(M)$ a.e. in $\Omega,\; \forall t \in \mathbb{R}$.
\begin{align*}
I_{n}(te_{1})&=\frac{1}{p}\int_{\Omega}{|\nabla(te_{1})|}^{p}dx - \frac{\lambda}{q}\int_{\Omega}{|te_{1}|}^{q}dx - \int_{\Omega}F(x,te_{1})dx - \int_{\Omega}(te_{1})\mu_{n}dx\\
&\leq \frac{{|t|}^p}{p}{\|e_{1}\|}^{p}-\frac{\lambda {{|t|}^{q}}}{q}\int_{\Omega}{|e_{1}|}^{q}dx - M{{|t|}^{p}}\int_{\Omega}{|e_{1}|}^{p}dx+k(M){|\Omega|}- t\int_{\Omega}e_{1}\mu_{n}dx
\end{align*}
Choose $M$ large enough such that the whole quantity becomes negative. Hence, $I_{n}(te_{1}) < 0$ for $t$ sufficiently large.
\end{proof}

\begin{lemma}\label{lem3}
There exists $e_{2} \in W_{0}^{1,p}(\Omega), \|e_{2}\| > 0$ such that $I_{n}(te_{2}) < 0;\, \forall t > 0$ in a small neighborhood of 0.
\end{lemma}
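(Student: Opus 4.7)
The plan is to choose $e_{2}$ as a nonnegative, nontrivial, bounded test function (for instance any $e_{2}\in C_{c}^{\infty}(\Omega)$ with $e_{2}\geq 0$, $e_{2}\not\equiv 0$), and then exploit the fact that $q<p$ so that on a small enough interval the negative $t^{q}$-term dominates the positive $t^{p}$-term, while each of the remaining two terms can simultaneously be arranged to be non-positive.

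Concretely, I would expand
\[
I_{n}(te_{2}) \;=\; \frac{t^{p}}{p}\|e_{2}\|^{p}
\;-\;\frac{\lambda t^{q}}{q}\|e_{2}\|_{q}^{q}
\;-\;\int_{\Omega}F(x,te_{2})\,dx
\;-\;t\int_{\Omega}e_{2}\,d\mu_{n},
\]
and then handle the last two terms for $t>0$ small. Restricting to $0<t\le\bar{t}/\|e_{2}\|_{\infty}$ guarantees $te_{2}(x)\in[0,\bar{t}]$ a.e., so hypothesis $(f_{1})$ forces $F(x,te_{2}(x))\ge 0$; hence $-\int_{\Omega}F(x,te_{2})\,dx\le 0$. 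Since $\mu_{n}\ge 0$ (it is a positive Radon measure approximating $\mu>0$) and $e_{2}\ge 0$, the measure term is also non-positive.

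Dropping these two non-positive contributions yields
\[
I_{n}(te_{2}) \;\le\; t^{q}\!\left(\frac{t^{\,p-q}}{p}\|e_{2}\|^{p}
\;-\;\frac{\lambda}{q}\|e_{2}\|_{q}^{q}\right).
\]
Because $p>q$, the quantity in parentheses tends to $-\tfrac{\lambda}{q}\|e_{2}\|_{q}^{q}<0$ as $t\to 0^{+}$, so it becomes strictly negative on some interval $(0,t_{0})$, with $t_{0}$ depending only on $\lambda$, $\|e_{2}\|$, $\|e_{2}\|_{q}$ (and the constraint $t_{0}\le \bar{t}/\|e_{2}\|_{\infty}$). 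This gives $I_{n}(te_{2})<0$ for all $t\in(0,t_{0})$, as claimed.

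The proof is essentially a sub-superlinearity balance, so the only real point to watch is the two sign reductions: one must choose $e_{2}$ nonnegative and bounded so that $(f_{1})$ applies and so that $-t\int_{\Omega}e_{2}\,d\mu_{n}\le 0$. Both are obtained for free by picking $e_{2}\in C_{c}^{\infty}(\Omega)$ with $e_{2}\ge 0$; everything else is a direct comparison of powers of $t$. No compactness or variational machinery is needed here.
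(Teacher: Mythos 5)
Your proposal is correct and follows essentially the same route as the paper: restrict $t$ so that $(f_1)$ makes the $F$-term non-positive, drop the measure term by sign, and use $q<p$ to let the $-t^{q}$ term dominate the $t^{p}$ term for small $t$. In fact you are slightly more careful than the paper, which never states explicitly that $e_{2}$ must be chosen nonnegative (and bounded) for $(f_1)$ to apply and for $-t\int_{\Omega}e_{2}\,d\mu_{n}\le 0$ to hold; your choice $e_{2}\in C_{c}^{\infty}(\Omega)$, $e_{2}\ge 0$, supplies exactly what the paper's argument tacitly assumes.
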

\begin{proof}
Let $e_{2} \in W_{0}^{1,p}(\Omega)$ with ${\|e_{2}\|} > 0$. From the assumption in $(f_{1})$, we have, $\;\forall t \in [0,\bar{t}]$ and $x \in \Omega$ a.e., $F(x,t) \geq 0$. So, for $t \in \left(0,\frac{\bar{t}}{\|e_{2}\|}_{L^{\infty}(\Omega)}\right)$, we have
\begin{align*}
I_{n}(te_{2})&= \frac{1}{p}\int_{\Omega}{|\nabla(te_{2})|}^{p}dx - \frac{\lambda}{q}\int_{\Omega}{|te_{2}|}^{q}dx - \int_{\Omega}F(x,te_{2})dx - \int_{\Omega}(te_{2})\mu_{n}dx\\
&\leq \frac{t^p}{p}{\|e_{2}\|}^{p}-\frac{\lambda {t^q}}{q}\int_{\Omega}{|e_{2}|}^{q}dx-t\int_{\Omega}e_{2}\mu_{n}dx\\ 
&\leq \frac{t^{p}}{p}{\|e_{2}\|}^{p}-\frac{\lambda {t^q}}{q}\int_{\Omega}{|e_{2}|}^{q}dx.
\end{align*}
We need to find a $t > 0$ for which $I_{n}(te_{2})$ is less than 0. For this we consider,
\begin{align*}
0&>\frac{t^{p}}{p}{\|e_{2}\|}^{p}-\frac{\lambda {t^q}}{q}\int_{\Omega}{| e_{2}|}^{q}dx \\ 
&= t^{q}\left[\frac{t^{p-q}}{p}{\|e_{2}\|}^{p}-\frac{\lambda}{q}\int_{\Omega}{|e_{2}|}^{q}dx\right].
\end{align*}
Thus,
\begin{align*}
\frac{t^{p-q}}{p}{\|e_{2}\|}^{p} &< \frac{\lambda}{q}\int_{\Omega}{|e_{2}|}^{q}dx
\end{align*}
and so
\begin{align*}
t& < {\left(\frac{\lambda p\int_{\Omega}{|e_{2}|}^{q}dx}{q{\|e_{2}\|}^{p}}\right)}^{\frac{1}{p-q}}.
\end{align*}

Thus, if $0 < t < \min\left\lbrace {\left(\frac{\lambda p\int_{\Omega}{|e_{2}|}^{q}dx}{q{\|e_{2}\|}^{p}}\right)}^{\frac{1}{p-q}},\frac{\bar{t}}{{\|e_{2}\|}_{L^{\infty}(\Omega)}}\right\rbrace$ then $I_{n}(te_{2}) < 0$.
\end{proof}

\begin{lemma}\label{lem4}
The functional $I_n$ satisfies the Cerami condition.
\end{lemma}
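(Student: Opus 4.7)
The plan is to first establish that any Cerami sequence $(u_k)\subset W_0^{1,p}(\Omega)$ for $I_n$ is bounded, and then use the well-known $(S_+)$ property of the $p$-Laplacian to upgrade weak convergence to strong convergence. Boundedness will rely on $(f_3)$ (which plays the role of the Ambrosetti--Rabinowitz condition) together with $(f_4)$, while the strong convergence step uses the subcritical growth condition $(f_0)$ and the fact that $\mu_n$ is a regularized measure in $L^{p'}(\Omega)$ so that $v\mapsto\int_{\Omega} v\mu_n$ is a continuous linear functional on $W_0^{1,p}(\Omega)$.

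For boundedness, I would argue by contradiction and assume $\|u_k\|\to\infty$. Set $v_k=u_k/\|u_k\|$, so $\|v_k\|=1$, and pass to a subsequence with $v_k\rightharpoonup v$ in $W_0^{1,p}(\Omega)$, $v_k\to v$ in $L^r(\Omega)$ for every $r<p^{*}$, and $v_k\to v$ a.e. in $\Omega$. The Cerami condition gives $I_n(u_k)/\|u_k\|^p\to 0$, which on dividing the functional by $\|u_k\|^p$ collapses (using $q<p$ and $\mu_n\in L^{p'}(\Omega)$) to
\[
\int_{\Omega}\frac{F(x,u_k)}{\|u_k\|^p}\,dx\longrightarrow\frac{1}{p}.
\]
If $|\{v\ne 0\}|>0$, then on this set $|u_k(x)|\to\infty$, and writing $F(x,u_k)/\|u_k\|^p=(F(x,u_k)/|u_k|^p)|v_k|^p$, condition $(f_3)$ together with Fatou's lemma (after adding the constant lower bound of $F$, coming from $(f_0)$ and continuity) forces the left-hand side to $+\infty$, a contradiction. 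If instead $v\equiv 0$, I would apply Jeanjean's trick: choose $t_k\in[0,1]$ with $I_n(t_k u_k)=\max_{t\in[0,1]}I_n(tu_k)$ and, for each fixed $M>0$, test against $Mv_k=(M/\|u_k\|)u_k$. Since $v_k\to 0$ in $L^r$ for $r<p^*$ and $F(x,Mv_k)\to 0$ a.e., Vitali's theorem (justified via $(f_0)$) gives $I_n(Mv_k)\to M^p/p$, hence $\liminf_k I_n(t_k u_k)\ge M^p/p$ for every $M$, so $I_n(t_k u_k)\to\infty$. On the other hand, the maximality of $t_k$ makes $\langle I_n'(t_ku_k),t_ku_k\rangle=0$ for $t_k\in(0,1)$, and the functional identity
\[
I_n(t_ku_k)-\tfrac{1}{p}\langle I_n'(t_ku_k),t_ku_k\rangle=\lambda\!\left(\tfrac{1}{p}-\tfrac{1}{q}\right)\!\int_{\Omega}|t_ku_k|^q+\int_{\Omega}\!\left(\tfrac{1}{p}f(x,t_ku_k)t_ku_k-F(x,t_ku_k)\right)-\tfrac{p-1}{p}\!\int_{\Omega} t_ku_k\,\mu_n
\]
together with the monotonicity in $(f_4)$ (which forces $\frac{1}{p}f(x,t)t-F(x,t)$ to be bounded below and non-decreasing in $|t|$ for $|t|\ge\tilde t$) compared with the analogous identity at $u_k$ produces a uniform upper bound on $I_n(t_ku_k)$, contradicting $I_n(t_ku_k)\to\infty$. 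The boundary cases $t_k=0$ and $t_k=1$ are ruled out directly ($I_n(0)=0$ and $I_n(u_k)$ is bounded while $I_n(t_ku_k)\to\infty$).

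With boundedness in hand, extract $u\in W_0^{1,p}(\Omega)$ with $u_k\rightharpoonup u$ in $W_0^{1,p}(\Omega)$, $u_k\to u$ strongly in $L^r(\Omega)$ for $r<p^*$, and $u_k\to u$ a.e. Then $\int_{\Omega}|u_k|^{q-2}u_k(u_k-u)\to 0$ by strong $L^q$ convergence; $\int_{\Omega}f(x,u_k)(u_k-u)\to 0$ by $(f_0)$ and a standard $L^r$--$L^{r'}$ splitting for $r$ just below $p^*$; and $\int_{\Omega}(u_k-u)\mu_n\to 0$ because $\mu_n\in L^{p'}(\Omega)$ and $u_k\to u$ in $L^{p}(\Omega)$. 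Combined with $(1+\|u_k\|)I_n'(u_k)\to 0$, which gives $\langle I_n'(u_k),u_k-u\rangle\to 0$, this yields
\[
\limsup_{k\to\infty}\int_{\Omega}|\nabla u_k|^{p-2}\nabla u_k\cdot\nabla(u_k-u)\,dx\le 0,
\]
and the $(S_+)$ property of $-\Delta_p$ on $W_0^{1,p}(\Omega)$ then gives $u_k\to u$ strongly, completing the verification of the Cerami condition.

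\textbf{Main obstacle.} The genuinely delicate step is the boundedness argument in the case $v\equiv 0$: one must carefully combine the Jeanjean maximizing-path device with the monotonicity $(f_4)$ to transport the bound on $I_n(u_k)$ to a bound on $I_n(t_ku_k)$, and simultaneously control the negative concave term $-(\lambda/q)\int|u_k|^q$ and the measure contribution. Once this technical comparison is in place, the rest reduces to standard tools.
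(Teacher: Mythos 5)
Your overall architecture is exactly the paper's: argue boundedness by contradiction with the normalization $v_k=u_k/\|u_k\|$, split into the cases $v\not\equiv 0$ (Fatou plus $(f_3)$) and $v\equiv 0$ (Jeanjean's maximizing $t_k$ combined with the monotonicity of $\tilde F(x,t)=f(x,t)t-pF(x,t)$ from $(f_4)$, which is Liu's Lemma 2.3 quoted in the paper), and then upgrade weak to strong convergence using $(f_0)$, the $L^{p'}$ regularity of $\mu_n$, and the $(S_+)$ behaviour of $-\Delta_p$ (the paper reaches the same conclusion via Vitali's theorem and equi-absolute continuity rather than naming $(S_+)$, but the computation is identical).

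There is, however, one step in your $v\equiv 0$ case that does not close as written. You claim that the comparison identity plus $(f_4)$ ``produces a uniform upper bound on $I_n(t_ku_k)$,'' but it does not: after applying Liu's lemma you get
\[
I_n(t_ku_k)\;\le\; I_n(u_k)-\tfrac1p\langle I_n'(u_k),u_k\rangle+\lambda\Bigl(\tfrac1q-\tfrac1p\Bigr)\int_\Omega|u_k|^q+C\|\mu_n\|_{p'}\,\|u_k\|+C,
\]
and the concave term and the measure term grow like $\|u_k\|^q$ and $\|u_k\|$ respectively, so the right-hand side is unbounded. Consequently the conclusion you extract from fixed $M$ --- merely that $I_n(t_ku_k)\to\infty$ --- is not in contradiction with this upper bound. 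To finish you must make the lower bound grow strictly faster than $\|u_k\|^{\max(q,1)}$; this is precisely why the paper takes a comparison radius $r_{k,n}\sim\|u\|$ so that $I_n(t_ku_k)\gtrsim\|u\|^p$, and then divides the resulting inequality by $\|u\|^p$, using $q<p-1<p$ to kill the right-hand side and obtain $1\le 0$. (Equivalently, in your notation, one should let $M=M_k\to\infty$ with $M_k/\|u_k\|\to 0$ and $M_k^p\gg\|u_k\|^q+\|u_k\|$, checking that the lower-order terms in $I_n(M_kv_k)$ remain $o(M_k^p)$; this requires an extra argument beyond the fixed-$M$ dominated convergence you invoke.) Everything else in your plan matches the paper and is sound.
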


\begin{proof}
Let $(u_{m,n})$ be a sequence in $W_{0}^{1,p}(\Omega)$ such that $I_{n}(u_{m,n}) \to c$, $
(1+{\| u_{m,n}\|}){\|I_{n}^{\prime}(u_{m,n})\|}\rightarrow 0$ as $m\rightarrow \infty$, where ${\|I_{n}^{\prime}(u_{m,n})\|}=\mathrm{sup}\;\left\lbrace \mid<I_{n}^{\prime}(u_{m,n}),\phi>\mid : \phi \in W_{0}^{1,p}(\Omega), {\|\phi\|}=1\right\rbrace$. We first show that $(u_{m,n})$ is bounded. For if not, i.e. ${\| u_{m,n}\|} \rightarrow \infty$ as $m \rightarrow \infty$, define $v_{m,n}=\frac{u_{m,n}}{\|u_{m,n}\|}$ so that ${\|v_{m,n}\|}=1$. Since, $W_{0}^{1,p}(\Omega)$ is a reflexive space, so $(v_{m,n})$ has a weakly convergent subsequence in $W_{0}^{1,p}(\Omega)$. Let $v_{m,n} \rightharpoonup v_{0}$ in $W_{0}^{1,p}(\Omega).$ Due to the compact embedding we have
\begin{align}
v_{m,n} \rightarrow v_{0}\; \mathrm{in}\; L^{r}(\Omega)\;\mathrm{for}\; r \in\; [1,p^{\ast})\mathrm{\;and\; hence\; upto\; a\; subsequence}\\ \MoveEqLeft \hspace{-106mm}v_{m,n}(x) \rightarrow v_{0}(x)\;\; \mathrm{a.e.\; in}\; \Omega \;\mathrm{as}\; m \rightarrow \infty.
\end{align}
We now have two cases.\\
Case (i): When $v_{0} \neq 0$.\\
Let $\Omega^{\prime}= \left\lbrace x \in \Omega : v_{0}(x) \neq 0\right\rbrace$. If $x \in \Omega^{\prime}$, then 
\begin{equation}
\hspace{-70mm}|u_{m,n}(x)|= |v_{m,n}(x)| {\|u_{m,n}\|} \rightarrow \infty \;\mathrm{a.e.\; in}\; \Omega^{\prime}.
\end{equation}
Since, $I_{n}(u_{m,n}) \rightarrow c$, we have $ \frac{I_{n}(u_{m,n})}{{\|u_{m,n}\|}^{p}} \rightarrow 0$. Hence, as $m \to \infty$
\begin{align*}
o(1)=\frac{1}{p}-\frac{\lambda}{q}{\int_{\Omega}\frac{{|u_{m,n}|}^q}{{\|u_{m,n}\|}^{p}}}dx - {\int_{\Omega^{\prime}}\frac{F(x,u_{m,n})}{{\|u_{m,n}\|}^{p}}dx}- {\int_{\Omega \setminus\Omega^{\prime}}\frac{F(x,u_{m,n})}{{\| u_{m,n}\|}^{p}}dx} - \int_{\Omega}\frac{u_{m,n}\mu_{n}}{{\|u_{m,n}\|}^{p}}dx .
\end{align*}
Using the Rayleigh constant $\lambda_{1}=\underset{\underset{u_{m,n}\neq 0}{u_{m,n}\in {W_{0}^{1,p}}\left(\Omega\right)}}{\text{min}}\left\lbrace \frac{\int_{\Omega}{|\nabla u_{m,n}|}^{p}dx}{\int_{\Omega}{|u_{m,n}|}^{p}dx}\right\rbrace $, we get 
$\int_{\Omega}{|u_{m,n}|}^{p} \leq \frac{{\|u_{m,n}\|}^p}{\lambda_{1}} 
$. This implies that $c\int_{\Omega}{|u_{m,n}|}^{q} \leq \frac{{\|u_{m,n}\|}^p}{\lambda_{1}}$, since $q < p$.\\

Thus,
\begin{align}
\begin{split}
o(1)&\leq \frac{1}{q}-\frac{\lambda}{q}{\int_{\Omega}\frac{{|u_{m,n}|}^q}{{\|u_{m,n}\|}^{p}}}dx - {\int_{\Omega^{\prime}}\frac{F(x,u_{m,n})}{{\|u_{m,n}\|}^{p}}dx}- {\int_{\Omega \setminus\Omega^{\prime}}\frac{F(x,u_{m,n})}{{\|u_{m,n}\|}^{p}}dx} - \int_{\Omega}\frac{u_{m,n}\mu_{n}}{{\| u_{m,n}\|}^{p}}dx \\
&\leq \frac{1}{q}\mathrm{max}\left\lbrace 1,1-\frac{\lambda}{c\lambda_{1}}\right\rbrace - {\int_{\Omega^{\prime}}\frac{F(x,u_{m,n})}{{\|u_{m,n}\|}^{p}}dx}- {\int_{\Omega \setminus\Omega^{\prime}}\frac{F(x,u_{m,n})}{{\|u_{m,n}\|}^{p}}dx} - \frac{\int_{\Omega}u_{m,n}\mu_{n}}{{\|u_{m,n}\|}^{p}}dx \\
&\leq \frac{1}{q}\mathrm{max}\left\lbrace 1,1-\frac{\lambda}{c\lambda_{1}}\right\rbrace - {\int_{\Omega^{\prime}}\frac{F(x,u_{m,n})}{{\|u_{m,n}\|}^{p}}dx}- {\int_{\Omega \setminus\Omega^{\prime}}\frac{F(x,u_{m,n})}{{\|u_{m,n}\|}^{p}}dx} +\left|\frac{\int_{\Omega}u_{m,n}\mu_{n}}{{\|u_{m,n}\|}^{p}}dx \right|\\
&\leq \frac{1}{q}\mathrm{max}\left\lbrace 1,1-\frac{\lambda}{c\lambda_{1}}\right\rbrace - {\int_{\Omega^{\prime}}\frac{F(x,u_{m,n})}{{\|u_{m,n}\|}^{p}}dx}- {\int_{\Omega \setminus\Omega^{\prime}}\frac{F(x,u_{m,n})}{{\|u_{m,n}\|}^{p}}dx} + \frac{{\|\mu_{n}\|}_{p^{\prime}}{\|u_{m,n}\|}_{p}}{{\|u_{m,n}\|}^{p}}\\
&\leq \frac{1}{q}\mathrm{max}\left\lbrace 1,1-\frac{\lambda}{c\lambda_{1}}\right\rbrace - {\int_{\Omega^{\prime}}\frac{F(x,u_{m,n})}{{\|u_{m,n}\|}^{p}}dx}- {\int_{\Omega \setminus\Omega^{\prime}}\frac{F(x,u_{m,n})}{{\|u_{m,n}\|}^{p}}dx} + \frac{{\|\mu_{n}\|}_{p^{\prime}}{\|u_{m,n}\|}(\lambda_{1})^{\frac{-1}{p}}}{{\|u_{m,n}\|}^{p}}\label{ineq1}
\end{split}
\end{align}
Also, \begin{align*}
\begin{split}
\frac{F(x,u_{m,n})}{{\|u_{m,n}\|}^{p}}&= \frac{F(x,u_{m,n})}{{|u_{m,n}(x)|}^{p}}.\frac{{|u_{m,n}(x)|}^{p}}{{\|u_{m,n}\|}^{p}}\\&= \frac{F(x,u_{m,n})}{{|u_{m,n}(x)|}^{p}}.{|v_{m,n}(x)|}^{p}
\end{split}
\end{align*}
Since
$\underset{{|t|} \rightarrow \infty}{\text{lim}}\frac{F(x,t)}{{|t|}^{p}}=\infty$ and $v_{m,n} \rightarrow v_{0}$ in $L^{p}(\Omega)$ with $v_{0}(x)\neq 0$, then
$\frac{F(x,u_{m,n})}{{\|u_{m,n}\|}^{p}}\rightarrow \infty$ a.e. in $\Omega^{\prime}$. Using the Fatou's lemma, we have $\underset{m \to \infty}{\text{lim}}\int_{\Omega^{\prime}}\frac{F(x,u_{m,n})}{{\|u_{m,n}\|}^{p}}dx=\infty$.\\
From the assumption in $(f_{3}), \underset{|t| \rightarrow \infty}{\text{lim}}F(x,t) = \infty$ uniformly in $\bar{\Omega}$ and hence, there exists two positive constants $\bar{t}$ and M such that $F(x,t) \geq M$ for every $x \in \bar{\Omega}$ and for all t such that $|t| > \bar{t}$. Since $F$ is continuous on $\bar{\Omega}\times\mathbb{R}$, so $|F(x,t)| \leq  c_1$ for every $
 x \in \bar{\Omega}$ and $|t| \leq \bar{t}$.
Therefore, there exists a $k$ such that
\begin{align}
F(x,t) \geq k\; \mathrm{for\; any}\;(x,t) \in\bar{\Omega}\times\mathbb{R}.
\end{align}
By our assumption that ${\|u_{m,n}\|}$ is unbounded in $W_{0}^{1,p}(\Omega)$ and using (3.5),\\
$\underset{m \rightarrow \infty}{\text{lim}}\int_{\Omega \setminus\Omega^{\prime}}\frac{F(x,u_{m,n})}{{\|u_{m,n}\|}^{p}}dx \geq \underset{m \rightarrow \infty}{\text{lim}}\frac{k|\Omega\setminus\Omega'|}{\|u_{m,n}\|^p}=0$.
The last term in \eqref{ineq1} converges to $0$ owing to $p > 1$. This yields a contradiction that $0\leq -\infty$. Hence, ${\|u_{m,n}\|}$ is bounded in $W_{0}^{1,p}(\Omega)$.\\\\
Case(ii): When $v_{0}= 0$.\\
Since, $t\mapsto I_{n}(tu_{m,n})$ is continuous in $t \in [0,1]$, hence for each $m$ there exists $t_{m} \in [0,1]$ such that $I_{n}(t_{m}u_{m,n})=\underset{t\in[0,1]}{\text{max}}I_{n}(tu_{m,n})$.
For any $k\in\mathbb{N},$ choose  $r_{k,n}=\left(2p{\|u_{l,n}\|}^{p}\right)^{\frac{1}{p}}$ such that $r_{k,n}{\|u_{m,n}\|}^{-1} \in (0,1)$ for any fixed big integer $k$.
Using the dominated convergence theorem and the fact that $v_{0}=0$, we get $\underset{m \rightarrow \infty}{\text{lim}}\int_{\Omega}{|r_{k,n}v_{m,n}(x)|}^{q}dx=0$ and $\underset{m \rightarrow \infty}{\text{lim}}\int_{\Omega}{|r_{k,n}v_{m,n}(x)|}^{p}dx=0$. Since, $v_{m,n}(x) \rightarrow v_{0}(x)$ a.e. $\Omega$ and $F$ is continuous so
$F(x,r_{k,n}v_{m,n}(x)) \rightarrow F(x,r_{k,n}v_{0}(x))$ a.e. in $\Omega$.\\
From $(f_{0})\;\forall \epsilon > 0,\; \exists\; c(\epsilon) > 0$ such that $|F(x,t)| \leq \frac{\epsilon}{c_{1}}{|t|}^{p^{\ast}}+c(\epsilon),\;\forall\; t \in \mathbb{R}, \mathrm{a.e.\; in}\; \Omega$. Using the dominated convergence theorem, $\int_{\Omega}F(x,r_{k,n}v_{m,n}(x))\rightarrow 0$ as $ m \rightarrow \infty,\; \forall\; k \in \mathbb{N}$ since $F(x,0)=0$.
\begin{align*}
I_{n}(t_{m}u_{m,n})\geq& I_{n}(r_{k,n}{\|u_{m,n}\|}^{-1}u_{m,n})\\=&I_{n}(r_{k,n}v_{m,n})\\=&
\frac{1}{p}\int_{\Omega}{|\nabla r_{k,n}v_{m,n}|}^{p} dx-\frac{\lambda}{q}\int_{\Omega}{|r_{k,n}v_{m,n}|}^{q} dx-\int_{\Omega}F(x,r_{k,n}v_{m,n})dx
-\int_{\Omega}r_{k,n}v_{m,n}\mu_{n}dx&\\
\geq& \frac{1}{p}\int_{\Omega}\left({\|u_{k,n}\|}^{p}(2p){|\nabla v_{m,n}|}^{p}\right) dx-\frac{\lambda}{q}\int_{\Omega}{|r_{k,n}v_{m,n}|}^{q} dx-\int_{\Omega}F(x,r_{k,n}v_{m,n})dx\\-&\int_{\Omega}{|r_{k,n}v_{m,n}\mu_{n}|} dx&\\
\geq& 2{\|u_{k,n}\|}^{p}{\|v_{m,n}\|}^{p}-\frac{\lambda}{q}\int_{\Omega}{|r_{k,n}v_{m,n}|}^{q} dx-\int_{\Omega}F(x,r_{k,n}v_{m,n})dx-{\|\mu_{n}\|}_{p^{\prime}}{\|r_{k,n}v_{m,n}\|}_{p} 
\end{align*}
Since the last three term tends to zero as $n \to \infty$ so 
\begin{align}\label{eq3.7}
I_{n}(t_{m}u_{m,n}) \geq {\|u_{k,n}\|}^{p}
\end{align}
As $\| u_{k,n}\| \rightarrow \infty$ as $k \rightarrow \infty$ so $I_{n}(t_{m}u_{m,n}) \rightarrow \infty$ as $m \rightarrow \infty$ for any large integer $k$.
Since $I_{n}(u_{m,n}) \rightarrow c $ and $I_{n}(0)=0$. So, for $t_{m} \in (0,1), I_{n}^{\prime}(t_{m}u_{m,n})=0$ for any $n \in \mathbb{N}$ and
$\langle I_{n}^{\prime}(t_{m}u_{m,n}),t_{m}u_{m,n}\rangle = t_{m}\frac{d}{dt}\mid_{t=t_{m}}I_{n}(tu_{m,n})=0$.\\
\begin{align*}
I_{n}(t_{m}u_{m,n})=&I_{n}(t_{m}u_{m,n})-\frac{1}{p}\left\langle I_{n}^{\prime}(t_{m}u_{m,n}),t_{m}u_{m,n}\right\rangle  \\
=&\frac{1}{p}\int_{\Omega}{|\nabla t_{m}u_{m,n}|}^{p} dx-\frac{\lambda}{q}\int_{\Omega}{|t_{m}u_{m,n}|}^{q} dx-\int_{\Omega}F(x,t_{m}u_{m,n})dx-\int_{\Omega}t_{m}u_{m,n}\mu_{n}dx\\
&-\left\{\frac{1}{p}\int_{\Omega}{|\nabla t_{m}u_{m,n}|}^{p} dx\right.\\
&-\left.\frac{\lambda}{p}\int_{\Omega}{|t_{m}u_{m,n}|}^{q} dx-\frac{1}{p}\int_{\Omega}f(x,t_{m}u_{m,n})(t_{m}u_{m,n})dx-\frac{1}{p}\int_{\Omega}t_{m}u_{m,n}\mu_{n}dx\right\}\\
=&\lambda\left(\frac{1}{p}-\frac{1}{q}\right)\int_{\Omega}{|t_{m}u_{m,n}|}^{q}dx+\frac{1}{p}\int_{\Omega}f(x,t_{m}u_{m,n})(t_{m}u_{m,n})dx-\int_{\Omega}F(x,t_{m}u_{m,n})dx\\
&-\left(1-\frac{1}{p}\right)\int_{\Omega}t_{m}u_{m,n}\mu_{n}dx
\end{align*}
This implies
\begin{align*}I_{n}(t_{m}u_{m,n})+A\int_{\Omega}t_{m}u_{m,n}\mu_{n}dx &\leq \frac{1}{p}\int_{\Omega}f(x,t_{m}u_{m,n})(t_{m}u_{m,n})dx-\int_{\Omega}F(x,t_{m}u_{m,n})dx\\
&=\frac{1}{p}\int_{\Omega}\tilde{F}(x,t_{m}u_{m,n})dx,
\end{align*}
where $A= \left(1-\frac{1}{p}\right)$.\\ 
Using the Lemma 2.3 from \cite{Liu}, which states that
\begin{lemma}\label{liu}
If ($f_4$) holds, then for any $x\in \Omega$, $\tilde{F}(x,t)$ is increasing in $t\geq\bar{t}$ and decreasing in $t \leq -\bar{t}$, where $\tilde{F}(x,t)=f(x,t)t-pF(x,t)$. In particular, there exists $C_1 > 0$ such that $\tilde{F}(x,s)\leq \tilde{F}(x,t)+C_1$ for $x\in\Omega$ and $0 \leq s \leq t$ or $t \leq s \leq 0$,
\end{lemma}
\noindent we get
\begin{align*}
\frac{1}{p}\int_{\Omega}\tilde{F}(x,t_{m}u_{m,n})dx &= \frac{1}{p}\int_{\left\lbrace u_{m,n}\geq 0\right\rbrace}\tilde{F}(x,t_{m}u_{m,n})dx + \frac{1}{p}\int_{\left\lbrace u_{m,n} < 0\right\rbrace}\tilde{F}(x,t_{m}u_{m,n})dx\\
&\leq\frac{1}{p}\int_{\left\lbrace u_{m,n}\geq 0\right\rbrace}\left[\tilde{F}(x,u_{m,n})+c_{1}\right]dx +\frac{1}{p}\int_{\left\lbrace u_{m,n}< 0\right\rbrace}\left[\tilde{F}(x,u_{m,n})+c_{1}\right]dx\\ &= \frac{1}{p}\int_{\Omega}\tilde{F}(x,u_{m,n})dx+\frac{1}{p}c_1|\Omega|\\
&=I_{n}(u_{m,n})-\frac{1}{p}<I_{n}^{\prime}(u_{m,n}),u_{m,n}> + \lambda\left(\frac{1}{q}-\frac{1}{p}\right)\int_{\Omega}{|u_{m,n}|}^{q}dx\\
&~~~+A\int_{\Omega}u_{m,n}\mu_{n}dx+\frac{1}{p}c_1{|\Omega|}
\end{align*}
\begin{align*}
&\leq I_{n}(u_{m,n})-\frac{1}{p}<I_{n}^{\prime}(u_{m,n}),u_{m,n}> + \lambda\left(\frac{1}{q}-\frac{1}{p}\right)\int_{\Omega}{|u_{m,n}|}^{q}dx\\
&~~~+A\int_{\Omega}|{u_{m,n}\mu_{n}|}dx+\frac{1}{p}c_{1}{|\Omega|}\\
&\leq I_{n}(u_{m,n})-\frac{1}{p}<I_{n}^{\prime}(u_{m,n}),u_{m,n}> + \lambda\left(\frac{1}{q}-\frac{1}{p}\right)\int_{\Omega}{|u_{m,n}|}^{q}dx\\
&~~~+A{\|\mu_{n}\|}_{p^{\prime}}{\|u_{m,n}\|}+\frac{1}{p}c_{1}{|\Omega|}\\
&\leq I_{n}(u_{m,n})-\frac{1}{p}\left\langle I_{n}^{\prime}(u_{m,n}),u_{m,n}\right\rangle + \lambda\left(\frac{1}{q}-\frac{1}{p}\right)\int_{\Omega}{|u_{m,n}|}^{q}dx\\
&~~~+A{\|\mu_{n}\|}_{p^{\prime}}{\|u_{m,n}\|}^{q}+\frac{1}{p}c_{1}{|\Omega|}
\\
&\leq I_{n}(u_{m,n})-\frac{1}{p}\langle I_{n}^{\prime}(u_{m,n}),u_{m,n}\rangle + c_0\lambda\left(\frac{1}{q}-\frac{1}{p}\right){\|u_{m,n}\|}^{q}dx\\
&~~~+A{\|\mu_{n}\|}_{p^{\prime}}{\|u_{m,n}\|}^{q}+\frac{1}{p}c_{1}{|\Omega|}
\end{align*}
Hence
\begin{align*}
I_{n}(t_{m}u_{m,n})+A\int_{\Omega}t_{m}u_{m,n}\mu_{n}dx&\leq I_{n}(u_{m,n})-\frac{1}{p}\langle I_{n}^{\prime}(u_{m,n}),u_{m,n}\rangle\\ &+c_0\lambda\left(\frac{1}{q}-\frac{1}{p}\right){\|u_{m,n}\|}^{q}dx+A{\|\mu_{n}\|}_{p^{\prime}}{\|u_{m,n}\|}^{q}+\frac{1}{p}c_{1}{|\Omega|}.
\end{align*}
This implies
\begin{align*}
 I_{n}(t_{m}u_{m,n})\leq I_{n}(u_{m,n})-\frac{1}{p}\left\langle I_{n}^{\prime}(u_{m,n}),u_{m,n}\right\rangle +c_0\lambda\left(\frac{1}{q}-\frac{1}{p}\right){\| u_{m,n}\|}^{q}dx\\ \MoveEqLeft \hspace*{-80mm}+A{\|\mu_{n}\|}_{p^{\prime}}{\|u_{m,n}\|}^{q}+\frac{1}{p}c_{1}{|\Omega|}-A\int_{\Omega}t_{m}u_{m,n}\mu_{n}dx
\end{align*}
Using \eqref{eq3.7}, we get
\begin{align}
\begin{split}
{\|u_{k,n}\|}^{p}&\leq I_{n}(u_{m,n})-\frac{1}{p}<I_{n}^{\prime}(u_{m,n}),u_{m,n}>+c_0\lambda\left(\frac{1}{q}-\frac{1}{p}\right){\|u_{m,n}\|}^{q}dx\\ &+A{\|\mu_{n}\|}_{p^{\prime}}{\|u_{m,n}\|}^{q}+\frac{c_{1}}{p}{|\Omega|}-A\int_{\Omega}t_{m}u_{m,n}\mu_{n}dx\label{eq11}
\end{split}
\end{align}
Now, $\left|\frac{A\int_{\Omega}t_{m}u_{m,n}\mu_{n}dx}{{\|u_{k,n}\|}^{p}}\right| \leq \frac{A t_{m}\int_{\Omega}|u_{m,n}\mu_{n}| dx}{{\|u_{k,n}\|}^{p}} \leq  \frac{A t_{m}{\|u_{m,n}\|}_{p}{\|\mu_{n}\|}_{p^{\prime}}}{{\|u_{k,n}\|}^{p}} \leq \frac{A t_{m}{\|u_{m,n}\|}{\|\mu_{n}\|}_{p^{\prime}}}{{\|u_{k,n}\|}^{p}} \rightarrow 0\;$
as $\| u_{k,n}\| \rightarrow \infty$ $\forall k\geq m$.\\
Since $q<p-1$, hence on dividing \eqref{eq11} by ${\|u_{k,n}\|}^{p}$ and letting ${\|u_{k,n}\|}^{p} \rightarrow \infty$, as $k\rightarrow\infty$, we get $1 \leq 0$ which is a contradiction. Hence, $(u_{m,n})$ is bounded in $W_{0}^{1,p}(\Omega)$.\\
The next step is to show that $(u_{m,n})$ admits a strongly convergent subsequence in $W_0^{1,p}(\Omega)$. Since $W_{0}^{1,p}(\Omega)$ is a reflexive space so $(u_{m,n})$ has a subsequence which converges weakly to $u_n$ in $W_{0}^{1,p}(\Omega)$ and strongly in $L^{r}(\Omega)$ for $r \in [1,p^{\ast})$ due to Rellich's compact embedding. We also  have that ${\|u_{m,n}\|}_{p^{\ast}}^{p^{\ast}} \leq c_{2}$. 
\begin{equation} \label{eq3.8}
\begin{split}
\langle I_{n}^{\prime}(u_{m,n}),u_{m,n}-u_n\rangle & = \int_{\Omega}{|\nabla u_{m,n}|}^{p-2}\nabla u_{m,n}\cdot(\nabla u_{m,n}-\nabla u)dx-\int_{\Omega}{| u_{m,n}|}^{q-2}u_{m,n}(u_{m,n}-u_n)dx \\
 &  -\int_{\Omega}f(x,u_{m,n})(u_{m,n}-u_n)-\int_{\Omega}\mu_{n}(u_{m,n}-u_n)dx
\end{split}
\end{equation}
From the assumption in $(f_{0})$, we have $\forall\; \epsilon > 0 \;\exists\;m(\epsilon) > 0$ such that $|f(x,t)t| \leq \frac{\epsilon}{2c_{2}}{|t|}^{p^{\ast}}+m(\epsilon),\;\forall\; t \in \mathbb{R}, \mathrm{a.e. in}\; \Omega$. Choose $\delta=\frac{\epsilon}{2m(\epsilon)} > 0, F \subseteq \Omega$ such that $\mu(F) < \delta$ then
\begin{align}
\begin{split}
\left|\int_{F}f(x,u_{m,n})u_{m,n}dx\right| &\leq \int_{F}\left|f(x,u_{m,n})u_{m,n}\right|dx \\&\leq \int_{F}m(\epsilon)dx+\frac{\epsilon}{2c_{2}}\int_{F}{|u_{m,n}|}^{p^{\ast}}dx \\&\leq \epsilon\label{eq12}
\end{split}
\end{align}
Hence, $\left\lbrace \int_{\Omega}f(x,u_{m,n})u_{m,n}dx:m \in \mathbb{N}\right\rbrace$ is equiabsolutely continuous and therefore from the Vitali convergence theorem we get
\begin{equation}
\int_{\Omega}f(x,u_{m,n})u_{m,n}dx \rightarrow \int_{\Omega}f(x,u_n)u_n\; dx\; \mathrm{as} \;m \rightarrow \infty.\label{eq13}
\end{equation}
Based on exactly the same line of argument using the assumption in ($f_0$), 
it can be shown that, $\left\lbrace \int_{\Omega}f(x,u_{m,n})u_n\;dx:m \in \mathbb{N}\right\rbrace$ is equiabsolutely continuous and therefore from the Vitali convergence theorem we get
\begin{equation}
\int_{\Omega}f(x,u_{m,n})u_n\;dx \rightarrow \int_{\Omega}f(x,u_n)u_n\; dx\; \mathrm{as} \;m \rightarrow \infty\label{eqemer1}
\end{equation}
From ($\ref{eq13}$) and ($\ref{eqemer1}$), we get
\begin{align}
\int_{\Omega}f(x,u_{m,n})(u_{m,n}-u_n)dx \rightarrow 0 \; \mathrm{as} \;m \rightarrow \infty\label{eqemer2}
\end{align}
Again by using the H\"{o}lder's inequality and compact embedding results, we have 
\begin{align}
\int_{\Omega}{|u_{m,n}|}^{q-2}u_{m,n}(u_{m,n}-u_n)dx & \leq \int_{\Omega}\mid{|u_{m,n}|}^{q-2}u_{m,n}(u_{m,n}-u_n)\mid dx \nonumber\\
&= \int_{\Omega}{|u_{m,n}|}^{q-1}{|u_{m,n}-u_n|}dx\nonumber\\
&\leq \left(\int_{\Omega}{|u_{m,n}|}^{q}dx\right)^{\frac{q-1}{q}}\left(\int_{\Omega}{|u_{m,n}-u_n|}^{q}dx\right)^{\frac{1}{q}} \rightarrow 0 \;\mathrm{as}\; m \rightarrow \infty\label{eqemer3}
\end{align}
Since, $u_{m,n} \rightarrow u_n$ in $L^{p}(\Omega)$, so 
\begin{align}
\begin{split}
\int_{\Omega}(u_{m,n}-u_n)\mu_{n} &\leq {\|u_{m,n}-u_n\|}_{p}{\|\mu_{n}\|}_{p^{\prime}} \rightarrow 0~\text{as}~m \rightarrow \infty.\label{eq14}
\end{split}
\end{align}
We know that, $\left\langle I_{n}^{\prime}(u_{m,n}),u_{m,n}-u_n\right\rangle  \rightarrow 0$ as $m \rightarrow \infty$. Hence, from \eqref{eqemer2}, \eqref{eqemer3} and \eqref{eq14} we obtain
\begin{align*}
\int_{\Omega}{|\nabla u_{m,n}|}^{p-2}\nabla u_{m,n}\cdot(\nabla u_{m,n}-\nabla u_n)dx \rightarrow 0\; \mathrm{as}\;m \rightarrow \infty.
\end{align*}
This implies that $(u_{m,n})$ converges strongly to $u_n$ in $W_{0}^{1,p}(\Omega)$. Thus we have proved that the functional $I_n$ satisfies the Cerami condition.
\end{proof}
\noindent By the lemmas in Lemma $\ref{lem1}$, $\ref{lem2}$, $\ref{lem3}$, we can conclude that there exists $\lambda'$ such that for every $\lambda\in (0,\lambda')$ the functional $I_n$ satisfies the assumption of the Mountain-Pass theorem \cite{Evans}. Hence, there exists critical point $u_{n}\in \{u\in W_0^{1,p}(\Omega):\|u\|_{p^{*}}=1\}$ corresponding to each $\mu_{n}$ such that $I_n(u_n)=c>0$. So, $u_{n}$ will satisfy its weak formulation, i.e.
$\left\langle I_n^{\prime}(u_{n}),v\right\rangle  = 0 \;\forall\; v \in W_0^{1,p}(\Omega)$. This implies
\begin{equation}
\int_{\Omega}{|\nabla u_{n}|}^{p-2}\nabla u_{n}\nabla v\;dx-\lambda\int_{\Omega}{|u_{n}|}^{q-2}u_{n}v\;dx-\int_{\Omega}f(x,u_{n})v-\int_{\Omega}\mu_{n}v\;dx = 0
\end{equation}
{\it Proof of Theorem $\ref{Mainthm}$}.~We will now show that there exists another distinct nontrivial solution of the problem using the Ekeland's variational method. Since $I_n$ is a $C^1$ functional hence it is bounded below on the ball $\bar{B}_{r}(0)$. We can thus apply Ekeland variational principle (refer definition \ref{defn3}). Applying this principle to $I_n: {\bar{B}}_{r}(0) \rightarrow \mathbb{R}$ we find that to each $\delta>0$ there exists $u_{\delta} \in \bar{B}_{r}(0)$ such that $I_n(u_{\delta}) < \underset{u \in \bar{B_{r}(0)}}{\text{inf}}I_n(u)+\delta$ and $I_n(u_{\delta}) < I_n(u)+\delta{\|u-u_{\delta}\|}, u \neq u_{\delta}$. From Lemma \ref{lem1} and \ref{lem3}, we know that $\underset{u \in \partial  B_{r}(0)}{\text{inf}} I_n(u) \geq M > 0$ and $\underset{u \in \bar{B}_{r}(0)}{\text{inf}} I_n(u)< 0$.\\
Choose $\delta > 0$ such that $0 < \delta < \underset{u \in \partial B_{r}(0)}{\text{inf}} I_n(u)-\underset{u \in {\bar{B}}_{r}(0)}{\text{inf}} I_n(u)$.\\
Hence, $I_n(u_{\delta}) < \underset{u \in \partial B_{r}(0)}{\text{inf}} I_n(u)$ and so by the choice of $u_{\delta}$ we have $u_{\delta} \in B_{r}(0)$. We define another functional $J_n : {\bar{B}}_{r}(0) \rightarrow \mathbb{R}$ by $J_n(u)= I_n(u)+\delta{\|u-u_{\delta}\|}$. Due to the Ekeland variational principle, Definition \ref{defn3}, we have that $u_{\delta}$ is a minimum point of $J_n$. So 
$\frac{J_n\left(u_{\delta}+t\phi\right)-J_n(u_{\delta})}{t} \geq 0\; \forall\; t > 0$ small and \;$\forall\; \phi \in B_{r}(0)$.\\
Hence, $\frac{I_n\left(u_{\delta}+t\phi\right)-I_n(u_{\delta})}{t}+\delta{\|\phi \|} \geq 0$ and
$\left\langle I_n^{\prime}(u_{\delta}),-\phi\right\rangle   \geq -\delta{\|\phi \|}$ as $t \rightarrow 0^{+}$. Since, $-\phi \in B_{r}(0)$ we replace $\phi$ with $-\phi$ to get 
$\left\langle I_n^{\prime}(u_{\delta}),-\phi\right\rangle\geq -\epsilon{\|\phi \|}$ and hence $\left\langle I_n^{\prime}(u_{\delta}),-\phi\right\rangle   \leq \epsilon{\|\phi \|}$. This implies that ${\| I_n^{\prime}\left(u_{\delta}\right)\|} \leq \delta$.\\
Therefore there exists a sequence $(w_{m,n}) \subset B_{r}(0)$ such that $$I_n(w_{m,n}) \rightarrow {c^{\prime}}=\underset{u \in {\bar{B}}_{r}(0)}{\text{inf}} I_n(w_n)< 0$$ and $I_n^{\prime}(w_{m,n}) \rightarrow 0$ in $W_{0}^{1,p}(\Omega)$ as $m \rightarrow \infty$.
From Lemma $\ref{lem4}$, the sequence $(w_{m,n}) \rightarrow v_{n}$ in $W_{0}^{1,p}(\Omega)$ as $m \rightarrow \infty$. \\
Hence, $I_n(v_{n})=c^{\prime}, I_n^{\prime}(v_{n})=0$.
So, $v_n$ is a non-trivial weak solution of the considered problem.
 Since, $I_n(u_{n})=c > 0 > c^{\prime}=I_n(v_{n})$ so $u_{n}$ and $v_{n}$ are distinct nontrivial solutions of the problem $(P_n)$. Hence, the Theorem $\ref{Mainthm}$ is proved for the sequence of problems $(P_n)$.\\Choose a test function $v=T_k(u_n)$, where $T_k$ is a truncation operator defined as \[T_k(t)=\begin{cases}
 t, & |t| < k \\
 k, & |t |\geq k.
 \end{cases}\]
 Clearly $T_k(u_{n})\in W_0^{1,p}(\Omega)$. Define $A=\{x:|u_n(x)|\geq k\}$. We have 
 \begin{align*}
 \{|\nabla u_n|> t\} & = \{|\nabla u_n|> t,|u_n|< k\} \cup \{|\nabla u_n| > t,|u_n| \geq k\}
 \\& \subset \{|\nabla u_n|> t,|u_n|< k\} \cup \{|u_n| \geq k\}\subset \Omega.
 \end{align*}
 Hence, by the subadditivity of Lebesgue measure, we have
 \begin{equation}\label{eq5}
 |\{|\nabla u_n|> t\}| \leq |\{|\nabla u_n|> t,|u_n|< k\}| + |\{|u_n| \geq k\}|.
 \end{equation}
 Hence we have
 \begin{align*}
 \int_\Omega |\nabla T_k(u_n)|^p & \leq \lambda\int_\Omega |u_n|^{q-2}u_n T_k(u_n)+\int_{\Omega}f(x,u_n)T_k(u_n)+\int_{\Omega}\mu_nT_k(u_n)
 \\& \leq k\lambda|\Omega|^{1/q}\|u_n\|_{q}^{q/q'}+\epsilon\int _{(|u_n| > T)}|u_n|^{p^{*}-1}T_k(u_n)+\int _{\Omega\times[-T,T]}f(x,u_n)T_k(u_n)
 \\& ~~~+\int_{\Omega}\mu_nT_k(u_n)
 \\&  \leq C_1(\lambda,q,\Omega)k+C_2(\epsilon,\Omega)k+k\int_{\Omega}\mu_n
 \\& \leq Ck,
 \end{align*}
 where we have used the condition $(f_0)$ to bound the second integral and the $L^1$ bound of the sequence $(\mu_n)$, due to $\mu_n\rightharpoonup \mu$, to bound the third integral. Restricting the above integral on $A_1={\left\lbrace x: |u_n| < k \right\rbrace}$ we get,
 \begin{align}
 \int_{\left\lbrace |u_n| < k \right\rbrace} |\nabla T_k(u_n)|^p \nonumber& = \int_{\left\lbrace |u_n| < k \right\rbrace} |\nabla u_n|^p\nonumber  \\& \geq \int_{\left\lbrace |\nabla u_n|>t, |u_n|< k \right\rbrace} |\nabla u_n|^p\nonumber \\&\geq t^p|(\{|\nabla u_n|> t,|u_n| < k\}|\nonumber
 \end{align}
 so that, $$|\{|\nabla u_n|> t,|u_n| < k\}|\leq \frac{Ck}{t^p} \hspace{0.4cm}\forall k \geq 1.$$
 Therefore, from the Sobolev inequality $$\lambda_1\Bigg(\int_\Omega |T_k(u_n)|^{p^*}\Bigg)^{\frac{p}{p^*}}\leq \int_{\Omega}|\nabla T_k(u_n)|^p \leq Ck,$$ where, $\lambda_1$ is the first eigen value of the $p$-laplacian operator.
 Now, if we restrict the integral on the left hand side on $A_2=\{x: |u_n(x)| \geq k\} $, on which $T_k(u_n)=k$, we then obtain $$k^p|\{|u_n|\geq k\}|^{\frac{p}{p^*}}\leq Ck,$$
 so that $$|\{|u_n|\geq k\}|\leq \frac{C}{k^{\frac{N(p-1)}{N-p}}} \hspace{0.4cm} \forall k\geq1.$$ So, $(u_n)$ is bounded in $M^{\frac{N(p-1)}{N-p}}(\Omega)$. Now \eqref{eq5} becomes
 \begin{eqnarray}
 |\{|\nabla u_n|> t\}| &\leq & |\{|\nabla u_n|> t,|u_n| < k\}| + |\{|u_n| \geq k\}|\nonumber\\
 &\leq& \frac{Ck}{t^p} + \frac{C}{k^{\frac{N(p-1)}{N-p}}}, \forall k>1.\nonumber
 \end{eqnarray}
 We then choose $k=t^{\frac{N-p}{N-1}}$ and we get $$ |\{|\nabla u_n|> t\}|\leq \frac{C}{t^{\frac{N(p-1)}{N-1}}} \hspace{0.2cm} \forall t\geq 1,$$
 We have thus shown that $(\nabla u_n)$ is bounded in $M^{\frac{N(p-1)}{N-1}}$ and hence bounded in $W_0^{1,s}(\Omega)$ for $s<\frac{N(p-1)}{N-1}$.\\
 From the Definition $\ref{defn2}$ we have $\mu_n \rightharpoonup \mu$. Since ($u_n$) is bounded in $W_0^{1,s}(\Omega)$, which is a reflexive space, we have a subsequence such that $u_n\rightharpoonup u$ in $W_0^{1,s}(\Omega)$. Since $p-1<\frac{N(p-1)}{N-1}$ always holds and according to the assumption $q<p-1$, we have that $u_n \rightarrow u$ in $L^q(\Omega)$ by Rellich's compact embedding. Further, since $u_n \rightarrow u$ in $L^q(\Omega)$, hence by the Egoroff's theorem there exists a subsequence such that $u_n(x) \rightarrow u(x)$ almost everywhere in  $\Omega$. Thus, by the continuity of $f$ we have $f(x,u_n(x))\rightarrow f(x,u(x))$ in $\Omega$ almost everywhere. Summing up these results we find that 
 \begin{align}
 \int_{\Omega}\lambda|u_n|^{q-2}u_n.v &\rightarrow \int_{\Omega}\lambda|u|^{q-2}uv\nonumber\\
 \int_{\Omega}f(x,u_n)v&\rightarrow \int_{\Omega}f(x,u)v \nonumber\\
 \int_{\Omega}\mu_n v &\rightarrow\int_{\Omega}v d\mu
 \end{align}
 $\forall v\in W^{1,p}(\Omega)\cap C_0(\bar{\Omega})$. Since $\lambda|u|^{q-2}u+f(x,u)+\mu=\mu_{u}$, say, is a bounded Radon measure, we look at the following problem 
 \begin{align}
 -\Delta_p z&=\mu_u,~\text{in}~\Omega\nonumber\\
 v&=0,~\text{on}~\partial\Omega.\label{bocc_prob}
 \end{align}
 From \cite{bocc}, there exists a solution to \eqref{bocc_prob}. It can be guaranteed (refer Appendix A) that the sequence $(u_n)$ is compact in $W_0^{1,s}(\Omega)$, for $s\in \left[q,\frac{N(p-1)}{N-1}\right)$. In the course of the proof we have shown that $(\nabla u_n(x))$ is a Cauchy sequence over $\Omega$  (Claim $\ref{claim1}$ in Appendix A). 
 \begin{align}
 \int_{\Omega}|\nabla u|^{p-2}\nabla u.\nabla v&=\lim_{n\rightarrow\infty}\int_{\Omega}|\nabla u_n|^{p-2}\nabla u_n.\nabla v\nonumber\\
  &= \int_{\Omega}|\nabla z|^{p-2}\nabla z.\nabla v\nonumber
 \end{align}
$\forall v\in C_0^{\infty}(\bar{\Omega})$. Thus $z=u$ and hence the given problem has a solution. The argument can be repeated for the sequence of solution $(v_n)$ to produce a nontrivial solution, say, $w$. Thus we have shown the existence of two nontrivial solutions to the problem in \eqref{main_prob}.
\section{Appendix A}
The proof is motivated from the Lemma 1 in \cite{bocc}.\\
$(u_n)$ is bounded in $W_0^{1,s}(\Omega)$, for $s\in\left[q,\frac{N(p-1)}{N-1}\right)$ which implies that there exists a subsequence, which we will still denote as $(u_n)$, $u_n\rightharpoonup u$. This further implies that $u_n\rightarrow u$ in $L^s(\Omega)$ and hence there exists a subsequence such that $u_n(x)\rightarrow u(x)$ almost everywhere in $\Omega$. This further implies that $(u_n)$, $(\nabla u_n)$ is bounded in $L^1(\Omega)$.\\
\begin{claim}
$\nabla u_n(x)\rightarrow \nabla u(x)$ in $\Omega$\label{claim1}
\end{claim}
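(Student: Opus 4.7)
The strategy is to adapt the classical Boccardo--Gallou\"et monotonicity method for $p$-Laplacian problems with $L^1$ or measure data on the right-hand side. First I would rewrite the weak equation satisfied by $u_n$ as $-\Delta_p u_n = g_n$ in $\Omega$, with $g_n := \lambda|u_n|^{q-2}u_n + f(x,u_n) + \mu_n$, and observe that the already-established Marcinkiewicz bound on $u_n$, the growth assumption $(f_0)$, and the uniform total-variation bound inherited from $\mu_n \rightharpoonup \mu$ together make $(g_n)$ bounded in the space of Radon measures on $\Omega$. Since $u_n \rightharpoonup u$ in $W_0^{1,s}(\Omega)$ and $u_n \to u$ in $L^q(\Omega)$ by Rellich, passing to a subsequence I may assume pointwise a.e.\ convergence $u_n \to u$.

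For each fixed $k>0$, test the weak formulation for $u_n$ against the truncation $\varphi_n := T_k(u_n - u)$ (replacing $u$ by $T_h(u)$ and sending $h\to\infty$ after $n\to\infty$, if admissibility is an issue), and rewrite using $\nabla T_k(u_n-u) = (\nabla u_n - \nabla u)\chi_{\{|u_n - u| < k\}}$ and the symmetric expression with $|\nabla u|^{p-2}\nabla u$. This yields
\begin{align*}
\int_{\{|u_n - u|<k\}} & \bigl(|\nabla u_n|^{p-2}\nabla u_n - |\nabla u|^{p-2}\nabla u\bigr)\cdot (\nabla u_n - \nabla u)\,dx \\
 &= \int_\Omega g_n\,T_k(u_n - u)\,dx - \mathcal{R}_n^k,
\end{align*}
where $\mathcal{R}_n^k$ gathers the cross-terms pairing $|\nabla u|^{p-2}\nabla u$ with $\nabla T_k(u_n - u)$. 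Since $|T_k(u_n-u)|\leq k$ and $T_k(u_n - u) \to 0$ almost everywhere, the two function-valued pieces of $\int g_n T_k(u_n-u)$ vanish by Vitali, using the equiabsolute continuity produced by $(f_0)$ together with the Marcinkiewicz control on $(u_n)$; the measure-theoretic contribution $\int_\Omega T_k(u_n - u)\,d\mu_n$ vanishes via the weak convergence $\mu_n \rightharpoonup \mu$ paired with continuous approximations of $T_k(u_n - u)$; and $\mathcal{R}_n^k \to 0$ through $\nabla u_n \rightharpoonup \nabla u$ in $L^s(\Omega)$.

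By the monotonicity inequality $(|\xi|^{p-2}\xi - |\eta|^{p-2}\eta)\cdot(\xi - \eta)\geq 0$, the left-hand integrand is pointwise nonnegative, so its $L^1$-norm tends to zero. Extracting a further subsequence, $\nabla u_n(x) \to \nabla u(x)$ almost everywhere on $\{|u_n - u| < k\}$; the Marcinkiewicz estimate forces $|\{|u_n - u| \geq k\}| \to 0$ uniformly in $n$ as $k\to\infty$, so a diagonal argument over $k$ delivers the claim on the whole of $\Omega$. The main obstacle is the rigorous handling of the measure term $\int_\Omega T_k(u_n - u)\,d\mu_n$: because $T_k(u_n - u)$ converges only pointwise a.e.\ while $\mu$ may concentrate on Lebesgue-null sets, the weak-star convergence $\mu_n \rightharpoonup \mu$ does not by itself close the argument, and one must either approximate $u$ by smooth functions while monitoring the concentration of $\mu$, or exploit the $L^{p'}$-viewpoint on $\mu_n$ tacitly used elsewhere in the paper to transfer the control to a strong-convergence statement.
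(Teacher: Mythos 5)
Your plan follows the Boccardo--Murat route to almost-everywhere gradient convergence: test the equation for $u_n$ against $T_k(u_n-u)$, use the monotonicity of $\xi\mapsto|\xi|^{p-2}\xi$ to make the left-hand side a nonnegative quantity, deduce pointwise convergence of $\nabla u_n$ on $\{|u_n-u|<k\}$, and diagonalize in $k$. The paper instead follows Lemma 1 of Boccardo--Gallou\"et: it compares $u_n$ with $u_m$ rather than with the limit $u$, covers $\{|\nabla(u_n-u_m)|\geq\eta\}$ by three sets $E_1\cup E_2\cup E_3$ (large values of $u_n,u_m$ and their gradients; large values of $|u_n-u_m|$; the remaining bad set), controls $|E_1|$ and $|E_2|$ by the uniform $L^1$/Marcinkiewicz bounds and convergence in measure, and controls $|E_3|$ via a positive lower bound $\gamma(x)$ for the monotonicity expression on a compact set together with the estimate $\int_{E_3}\gamma\leq\cdots+2kM$, concluding that $(\nabla u_n)$ is Cauchy in measure and only afterwards identifying the limit with $\nabla u$ through the weak convergence. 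Your version buys a more direct identification of the limit, at the price of an admissibility issue you treat as optional but which is genuinely needed: $u$ lies only in $W_0^{1,s}(\Omega)$ with $s<p$, so $T_k(u_n-u)$ is not an admissible test function and the replacement of $u$ by $T_h(u)$ (which does lie in $W_0^{1,p}(\Omega)$ by the uniform truncation estimate) must actually be carried out. The paper's pairwise comparison avoids this entirely since each $u_n$, $u_m$ belongs to $W_0^{1,p}(\Omega)$.

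The one step that would fail as written is the assertion that the right-hand side tends to zero for fixed $k$ --- the very point you flag at the end. Indeed $\int_\Omega T_k(u_n-u)\,d\mu_n$ cannot be sent to zero by weak convergence of measures: $T_k(u_n-u)$ is neither continuous nor convergent in any norm dual to $\mathfrak{M}(\Omega)$, and $\mu$ may charge Lebesgue-null sets. But no such limit is needed. The correct and much simpler treatment is the one the paper itself uses: bound the term brutally by $k\,\sup_n\|\mu_n\|_{\mathfrak{M}(\Omega)}=kM$ using only $|T_k|\leq k$ and the uniform total-variation bound. Your conclusion then weakens from ``the left-hand side tends to zero'' to ``its $\limsup$ in $n$ is at most $CkM$,'' and passing to a.e.\ convergence of the gradients requires one further standard ingredient --- the $\theta$-power/H\"older trick of Boccardo--Murat, or equivalently the paper's device of choosing $k$ so small that $2kM<\delta/3$ before invoking Lemma \ref{bocc_lemma}. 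With that repair and the truncated replacement of $u$, your argument closes; neither of the two heavier fixes you propose for the measure term (continuous approximation of $T_k(u_n-u)$, or an $L^{p'}$ reading of $\mu_n$) is necessary.
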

\noindent Given $\eta>0$, $\epsilon>0$ and set $B>1$, $k>0$ $(n,m\in\mathbb{N})$. Define the following measurable sets.
\begin{align}
E_1&=\{x\in\Omega:|\nabla u_n(x)|>B\}\cup\{x\in\Omega:|\nabla u_m(x)|>B\}\cup\{x\in\Omega:|u_n(x)|>B\}\nonumber\\
&~~~~\cup\{x\in\Omega:|u_m(x)|>B\}\nonumber\\
E_2&=\{x\in\Omega:|u_n(x)-u_m(x)|>k\}\nonumber\\
E_3&=\{x\in\Omega:|\nabla u_n(x)|\leq k,|\nabla u_m(x)|\leq k,|u_n(x)|\leq k,|u_m|\leq k,|u_n(x)-u_m(x)|\leq k,\nonumber\\
&~~~~|\nabla(u_n-u_m)(x)|\geq \eta\}.\nonumber
\end{align}
We remark here that $\{x\in\Omega:|\nabla(u_n-u_m)|\geq \eta\}\subset E_1\cup E_2\cup E_3$. Since $(u_n)$, $(\nabla u_n)$ is bounded in  $L^1(\Omega)$, hence we can choose a sufficiently large $B$, independent of $n,m$, such that $|E_1|<\epsilon$. By the inequality \cite{grey}
\begin{align}
(|X|^{p-2}X-|Y|^{p-2}Y).(X-Y)&\geq C_p |X-Y|^p,~\text{if}~p\geq 2\nonumber\\
&\geq C_p\frac{|X-Y|^2}{(|X|+|Y|)^{2-p}} ~\text{if}~1<p< 2\nonumber
\end{align} 
we have $(|\nabla u|^{p-2}\nabla u-|\nabla v|^{p-2}\nabla v)>0$. So there exists a measurable function $\gamma$ such that $(|\nabla u|^{p-2}\nabla u-|\nabla v|^{p-2}\nabla v).(u-v)\geq \gamma(x)>0$. Let $a(x,s,\xi)=|\xi|^{p-2}\xi$. Thus we have $[a(x,s,\xi)-a(x,s,\psi)].(\xi-\psi)\geq\gamma(x)$, $\forall s\in\mathbb{R}$, $\xi, \psi\in\mathbb{R}^{N}$ such that $|s|, |\xi|, |\psi|\leq B$, $|\xi-\psi|\geq\eta$, $\forall x\in\Omega$.\\
In fact there exists a set $C\subset\Omega$ such that $|C|=0$ and $a(x,s,\xi)$ is continuous over $\Omega\setminus C$. Define
\begin{align}
K&=\{(s,\xi,\psi)\in\mathbb{R}^{2N+1}:|s|\leq B,|\xi|\leq B,|\psi|\leq B, |\xi-\psi|\geq \eta\}\nonumber
\end{align} 
which is a compact set in $\mathbb{R}^{2N+1}$. Then 
\begin{align}\label{app_eq1}
\inf\{[a(x,s,\xi)-a(x,s,\psi)].(\xi-\psi):(s,\xi,\psi)\in K\}&=\gamma(x)>0.
\end{align}
by the compactness of $K$. Thus by \eqref{app_eq1} we have
\begin{align}\label{app_eq2}
\int_{E_3}\gamma&\leq \int_{E_3}[a(x,u_n,\nabla u_n)-a(x,u_n,\nabla u_m)].\nabla(u_n-u_m)\nonumber\\
&\leq \int_{E_3}[a(x,u_m,\nabla u_m)-a(x,u_n,\nabla u_m)].\nabla(u_n-u_m)\nonumber\\
&~~~~+\int_{E_3}[a(x,u_n,\nabla u_n)-a(x,u_m,\nabla u_m)].\nabla(u_n-u_m)\nonumber\\
&\leq \int_{E_3}[a(x,u_m,\nabla u_m)-a(x,u_n,\nabla u_m)].\nabla(u_n-u_m)+2kM
\end{align}
where $M$ is the $L^1$ bound of the sequence of measures $(|u_n|^{q-2}u_n+f(x,u_n)+\mu_n)$. Due to the continuity of $a(x,s,\xi)$ with respect to $(s,\xi)$ almost everywhere in $\Omega$ we thus have for each $\bar{\epsilon}>0$ $\exists~\delta(x,\bar{\epsilon})\geq 0$ (with $|\{x\in\Omega:\delta(x,\bar{\epsilon})=0\}|=0$) such that $|s-s'|\leq\delta(x,\bar{\epsilon})$, $|s|, |s'|, |\xi|\leq B$ which implies $|a(x,s,\xi)-a(x,s,\psi)|\leq\bar{\epsilon}$.
We remark here that $\underset{k\rightarrow 0}{\lim}|\{x\in\Omega:\delta(x,\bar{\epsilon})|=0$.\\
Let $\delta>0$ be from Lemma 2 of \cite{bocc} which is as follows.
\begin{lemma}\label{bocc_lemma}
Let $(X,\Sigma,|.|)$ be a measurable space such that $|X|<\infty$. Let $f:X\rightarrow[0,\infty]$ such that  $|\{x\in X:f(x)=0\}|=0$. Then for any $\epsilon>0$ $\exists ~ \delta>0$ such that $\int_{A}fdm\leq\delta$ implies $|A|\leq\epsilon$.
\end{lemma}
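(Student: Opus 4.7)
The plan is to prove this by a standard level-set decomposition of $f$ combined with a Chebyshev-type estimate. The key observation is that since $|\{f=0\}|=0$ and the ambient space has finite measure, the superlevel sets $B_k := \{x \in X : f(x) > 1/k\}$ form an increasing family whose union is $\{f>0\}$, a set of full measure. Hence $f$ is bounded below by the positive constant $1/k$ outside a set whose measure can be made as small as desired by choosing $k$ large.

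Given $\epsilon > 0$, the first step is to invoke continuity of the measure from below to fix an integer $k = k(\epsilon)$ large enough that $|X \setminus B_k| < \epsilon/2$. The second step is then to set $\delta := \epsilon/(2k)$ and check that this choice does the job. For any measurable $A \subseteq X$ with $\int_A f\, dm \le \delta$, I would split $A = (A \cap B_k) \cup (A \setminus B_k)$. On $A \cap B_k$ the pointwise lower bound $f > 1/k$ gives, via a one-line Chebyshev argument, $|A \cap B_k| \le k \int_{A \cap B_k} f\, dm \le k\delta = \epsilon/2$, while $|A \setminus B_k| \le |X \setminus B_k| < \epsilon/2$ is immediate. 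Adding the two estimates yields $|A| < \epsilon$.

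I do not anticipate any serious obstacle here: this is essentially the classical argument that the finite measure $d\nu = f\, dm$ is absolutely continuous with respect to $m$. The only mild technical point is that $f$ is allowed to attain the value $+\infty$, but this causes no trouble because the argument on $B_k$ uses only the one-sided estimate $f > 1/k$, and the argument on $A \setminus B_k$ uses no information about $f$ at all.
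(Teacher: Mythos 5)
Your proof is correct. There is, however, nothing in the paper to compare it against: the paper states this result as ``Lemma 2 of \cite{bocc}'' (Boccardo--Gallou\"{e}t) and uses it as a black box, giving no proof of its own. Your argument --- exhaust $X$ by the superlevel sets $B_k=\{f>1/k\}$ using continuity of the measure from below (this is precisely where both hypotheses $|X|<\infty$ and $|\{f=0\}|=0$ enter, the first being needed to pass from $|B_k|\to|X|$ to $|X\setminus B_k|\to 0$), then bound $|A\cap B_k|$ by Chebyshev and $|A\setminus B_k|$ by $|X\setminus B_k|$ --- is the standard absolute-continuity argument and is complete; the choice $\delta=\epsilon/(2k)$ works, and the possibility $f=+\infty$ is harmless for exactly the reason you give. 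Two minor points you may as well record: the statement should read ``measure space'' rather than ``measurable space'' and should assume $f$ measurable, since your proof needs $B_k\in\Sigma$; and your conclusion actually yields the strict inequality $|A|<\epsilon$, slightly stronger than claimed.
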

We now choose $\bar{\epsilon}$ such that $\bar{\epsilon}<\frac{\delta}{3}$ and $k>0$ such that $|E_3\cap\{x:\delta(x,\bar{\epsilon})<k\}|<\frac{\delta}{3}$ and $2kM<\frac{\delta}{3}$. Then we finally have 
\begin{align}
\int_{E_3}\gamma&<\delta.
\end{align}
Thus from the Lemma 2 in \cite{bocc} (stated above in $(\ref{bocc_lemma})$) we have $|E_3|<\epsilon$ independent of $n,m$. This guarantess our claim that $(\nabla u_n)$ is a Cauchy sequence in $\Omega$. Now since $(\nabla u_n)$ is a Cauchy sequence in $\Omega$ we have that $\nabla u_n(x)\rightarrow v(x)$ almost everywhere in $\Omega$. Therefore, it is also Cauchy in $W_0^{1,2}$ topology and hence convergent to, $v$, say. In addition to this, we also have that $(u_n)$ is weakly convergent in $W_0^{1,s}(\Omega)$ for $s\in\left[q,\frac{N(p-1)}{N-1}\right)$ to $u$, i.e. $\nabla u_n\rightharpoonup \nabla u$. These two results implies that $v=\nabla u$ and
therefore, $(u_n)$ is compact and $\nabla u_n\rightarrow \nabla u$ in $W_0^{1,s}(\Omega)$. 
\section*{Acknowledgement}
The author Amita Soni thanks the Department of
Science and Technology (D. S. T), Govt. of India for financial
support. Both the authors also acknowledge the facilities received
from the Department of mathematics, National Institute of Technology
Rourkela.

{\sc Amita Soni} and {\sc D. Choudhuri}\\
Department of Mathematics,\\
National Institute of Technology Rourkela, Rourkela - 769008,
India\\
e-mails: soniamita72@gmail.com and dc.iit12@gmail.com.
\end{document}